\newtheorem{thm}{Theorem}[section]
\newtheorem{cor}[thm]{Corollary}
\newtheorem{lem}[thm]{Lemma}
\newtheorem{prop}[thm]{Proposition}
\theoremstyle{definition}
\newtheorem{exm}[thm]{Example}
\newtheorem{conj}[thm]{Conjecture}
\newtheorem{prob}[thm]{Problem}
\theoremstyle{remark}
\newcommand{\pbp}{$p  \times p$ }
\newcommand{\Ext}{\operatorname{Ext}}
\newcommand{\Hom}{\operatorname{Hom}}
\newcommand{\sgn}{\operatorname{sgn}}
\newcommand{\HH}{\operatorname{H}}
 \numberwithin{equation}{section}
\begin{document}

% \title[short text for running head]{full title}
\title[Frobenius twists]{``Frobenius twists"  in the representation theory of the symmetric group.}

\author{\sc David J. Hemmer}
\address{{Department of Mathematics\\ University at Buffalo, SUNY \\
244 Mathematics Building\\Buffalo, NY~14260, USA}}

\email{dhemmer@math.buffalo.edu}
\thanks{Research of the  author was supported in part by NSA
grant  H98230-10-0192}

\subjclass[2010]{Primary 20C30}

\date{April 2, 2012}

\begin{abstract}For the general linear group $GL_n(k)$ over an algebraically closed field $k$ of characteristic $p$, there are two types of ``twisting" operations that arise naturally on partitions. These are of the form $\lambda \rightarrow p\lambda$ and $\lambda \rightarrow \lambda + p^r\tau$ The first comes from the Frobenius twist, and the second arises in various tensor product situations, often from tensoring with the Steinberg module. This paper surveys and adds to an intriguing series of seemingly unrelated symmetric group results where this partition combinatorics arises, but with no structural explanation for it. This includes cohomology of simple, Specht and Young modules, support varieties for Specht modules, homomorphisms between Specht modules, the Mullineux map, $p$-Kostka numbers and tensor products of Young modules.
\end{abstract}

\maketitle

\section{Introduction}
\label{sec: Introduction}
 Let $k$ be an algebraically closed field of characteristic $p$. An important construction in the representation theory of the general linear group $G:=GL_n(k)$ is the Frobenius twist, which takes a $G$ module $M$ to the module $M^{(1)}$.  The action of $G$ on $M^{(1)}$ is as on $M$ except twisted by the Frobenius endomorphism $F: G \rightarrow G$, which raises each matrix entry to the $p$th power. Probably the most important $G$ modules are the Steinberg modules $\operatorname{St}_r=L((p^r-1)\rho)$. For example the operation of ``twist then tensor with $\operatorname{St}_r$" plays a key role in the proof of Kempf's vanishing theorem.

 In the last decade or so there have been a great variety of results and conjectures on the symmetric group $\Sigma_d$ that ``look like" they should come from doing a Frobenius twist or taking a tensor product with a Steinberg module, even though neither construction has any reasonable analogue in the world of $k\Sigma_d$ modules. Regular twisting results involve partitions $\lambda, p\lambda, p^2\lambda$, etc... Results reminiscent of twisting then tensor with $\operatorname{St}_r$ could relate $\lambda$ with $\lambda + p^r\tau.$ Both results we informally think of as twisting type theorems, keeping in mind again that there is no Frobenius twist for $k\Sigma_d$-modules.

 In this paper we survey the known results of this type, and add a couple more new results together with new examples, conjectures and a variety of open problems that remain. Particularly striking is the array of techniques that arise in the proofs of the various results. Twisting behavior seems to arise in many different ways. It is probably naive to expect some kind of uniform ``Frobenius twist" for symmetric groups that captures these diverse results. We will assume background information for $k\Sigma_d$ representation theory as found in \cite{JamesKerberbook}, and use the same notation.

Let $\lambda=(\lambda_1, \lambda_2, \ldots, \lambda_n)$ be a partition of $d$ with at most $n$ parts. These partitions correspond to dominant polynomial weights for $G$ and many natural $G$ modules are labeled by them; for example, irreducible modules $L(\lambda)$, Weyl modules $V(\lambda)$ and induced modules $\HH^0(\lambda)$. The interested reader will find Jantzen's book \cite{jantzenbook2nded} a definitive although likely unnecessary reference, as this paper will focus on the symmetric group.

For a $G$ module $M$, let $M^{(r)}$ denote the $r$th Frobenius twist \cite[I.9.10]{jantzenbook2nded} of $M$.  A special case of the Steinberg Tensor Product Theorem \cite[II.3.17]{jantzenbook2nded} gives that:

\begin{equation}
\label{eq: twistofsimplemodule}
L(\lambda)^{(1)} \cong L(p\lambda),
\end{equation}

\noindent where $p\lambda:=(p\lambda_1, \ldots, p\lambda_n) \vdash pd$. Equation \ref{eq: twistofsimplemodule} suggests the operation $\lambda \rightarrow p\lambda$ is quite natural for $G$, and it is not surprising to encounter theorems involving these ``twisted" partitions. For example the isomorphism
\begin{equation}
\label{eq: H1stabilizes Llambda} \HH^1(G, L(\lambda)) \cong \HH^1(G, L(p\lambda))
\end{equation}
\noindent is a  special case of \cite[Thm. 7.1]{CPSvdKrationalandgeneric} and is realized explicitly on the level of short exact sequences by applying the Frobenius twist.

Let $\rho=\rho_n$ denote the partition $(n-1,n-2, \ldots, 2, 1,0)$. Another operation that arises frequently in the representation theory of $G$ takes a partition $\lambda$ to $\lambda+p^r\tau$ for some other partition $\tau$, where $\lambda$ often involves the so-called Steinberg weight $(p^r-1)\rho$. For example \cite[II.3.19]{jantzenbook2nded}:
\begin{equation}
\label{eq: steinbergforinduced}
\HH^i((p^r-1)\rho) \otimes \HH^i(\tau)^{(r)} \cong \HH^i((p^r-1)\rho + p^r\tau).
\end{equation}

These results are also quite natural as $\HH^i((p^r-1)\rho)$ is the ubiquitous Steinberg module $\operatorname{St}_r$, which is simple and both projective and injective as a module for the Frobenius kernel $G_r$.

Turning our attention to $k\Sigma_d$, we again find modules labeled by partitions, this time by all partitions of $d$, not just those with at most $n$ parts. For example we have Specht modules $S^\lambda$, Young modules $Y^\lambda$, irreducible modules $D^\lambda$ for $\lambda $ $p$-regular, etc. However there is no analogue of the Frobenius twist. Moreover $p\lambda$ is a partition of $pd$, and so, for example,  $S^\lambda$ and $S^{p\lambda}$ are modules for different groups with no apparent connection. Nevertheless over the last ten years or so there have been numerous symmetric group results involving this kind of ``twisting" of partitions, and the proofs use an impressive variety of different techniques. Other results are reminiscent of \eqref{eq: steinbergforinduced}, even though there is no natural analogue of the Steinberg module for $\Sigma_d$.

\section{Schur subalgebras and the original ``twist".}
\label{sec: Morita}
We believe the first appearance of ``twisting" type results for the symmetric group arose in the thesis of Henke, published in part in the paper \cite{HenkeSchursubalgebras}. (Although James' computation of decomposition numbers for two-part partitions \cite[24.15]{Jamesbook} can be put in similar form). For example Henke proved:

\begin{thm}
\label{thm: Henke2part}
Fix $d$ and let $\lambda=(d-k, k)$ be a $p$-regular partition.  Then there is an $a \geq 1$ such that there exists a strong submodule lattice isomorphism between $S^{(d-k, k)}$ and $S^{(d-k+cp^a, k)}$ for any $c \geq 1$ such that $cp^a$ is even. Similar lattice isomorphisms exist for Young modules and permutation modules.
\end{thm}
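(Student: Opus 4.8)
The plan is to reduce the statement to the explicit combinatorics James attached to two-row partitions, and then to observe that this combinatorics is unchanged by the substitution $d \mapsto d+cp^{a}$ as soon as $p^{a}>k$ and $cp^{a}$ is even.

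First I would set up the combinatorial model. For $0\le m\le k$ the permutation module $M^{(d-m,m)}$ has a commutative endomorphism algebra $E_{m}:=\End_{\ks}\!\bigl(M^{(d-m,m)}\bigr)$ of dimension $m+1$, one basis vector per $(\sd_{d-m}\times\sd_{m})$-double coset, and James writes down explicitly the multiplication of $E_{m}$ and, more generally, all the spaces $\Hom_{\ks}(M^{(d-i,i)},M^{(d-j,j)})$ in terms of binomial coefficients in $d,i,j$ reduced modulo $p$ together with the relevant $p$-core (block) constraints. From these data one reads off the full submodule lattice of $M^{(d-k,k)}$, with the isomorphism types of all its subquotients, hence also the lattice of its indecomposable summand $Y^{(d-k,k)}$ and that of $S^{(d-k,k)}$, the latter arising as the kernel (equivalently image) of a prescribed composite of James's $\psi$-homomorphisms. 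The hypothesis that $(d-k,k)$ be $p$-regular is used only to guarantee that every $D^{(d-i,i)}$, $0\le i\le k$, occurring as a subquotient is non-zero: for $p>2$ this is automatic, while for $p=2$ it forces $d>2k$. Following Henke, the same information can be repackaged as an idempotent truncation $\varepsilon\,S(2,d)\,\varepsilon$ of the Schur algebra --- the ``Schur subalgebra'' of the section heading.

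Next comes the stabilization, which is the point of the theorem. Choose $a\ge 1$ with $p^{a}>k$. Every lower index appearing in the binomial coefficients above is at most $k<p^{a}$, so in base $p$ it is supported in positions $<a$; passing from $d$ to $d+cp^{a}$ therefore changes only the base-$p$ digits of $d$ in positions $\ge a$, and so by Lucas's theorem leaves every relevant binomial coefficient modulo $p$ unchanged --- for \emph{every} $c\ge 1$. Likewise the relevant $p$-cores are unchanged: since $p\mid cp^{a}$, the two-element $\beta$-sets of $(d-j,j)$ and of $(d+cp^{a}-j,j)$ have the same residues modulo $p$, hence the same abacus configuration up to $p$-hooks, hence the same $p$-core. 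And $cp^{a}$ even ensures that $\lambda_{1}-\lambda_{2}=d-2k$ keeps its parity, which is an invariant of the combinatorial data in play (vacuous when $p=2$). Consequently $E_{k}$ --- and more precisely the part of the block of $\ks$ through which $M^{(d-k,k)}$, $Y^{(d-k,k)}$ and $S^{(d-k,k)}$ are seen, equivalently the truncation $\varepsilon\,S(2,d)\,\varepsilon$ --- is isomorphic, together with its distinguished modules, to the corresponding object for $\sd_{d+cp^{a}}$. Transporting submodules across this isomorphism gives the required strong submodule lattice isomorphisms, simultaneously for every admissible $c$, under the labelling $D^{(d-i,i)}\leftrightarrow D^{(d+cp^{a}-i,\,i)}$.

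The genuine obstacle, I expect, is the step from ``the combinatorial data agrees'' to ``the module categories agree strongly''. Matching composition factors, or even all decomposition numbers and $p$-cores, is easy; but a lattice isomorphism preserving every subquotient requires the full multiplicative structure of $E_{k}$ (equivalently of $\varepsilon\,S(2,d)\,\varepsilon$) --- James's homomorphism formulas, not just his decomposition-number formula --- to be transported, and this is precisely where Henke's explicit Schur-algebra computation does the work. Its reward is that the isomorphism is functorial, so $M^{(d-k,k)}$, $Y^{(d-k,k)}$ and $S^{(d-k,k)}$ are all treated at once. Two minor points must be watched: arguing through $S(2,d)$ one must track the Mullineux twist built into the Schur functor (it relabels the simple modules but does not change which subquotients occur), and the small cases together with $p=2$ should be checked by hand, since it is there that the $p$-regularity hypothesis on $(d-k,k)$ is actually doing something.
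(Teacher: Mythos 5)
Your proposal matches the strategy the paper attributes to Henke: you encode James's two-row combinatorics in the Schur (sub)algebra $S(2,d)$, show via Lucas's theorem and $\beta$-set residues that the structure constants and block data stabilize once $p^{a}>k$ and $cp^{a}$ preserves the parity of $d$, and then transport the submodule lattice along the resulting algebra isomorphism --- precisely the ``explicit isomorphisms between generalized Schur algebras in different degrees \ldots translated to the symmetric group setting'' that Section 2 describes. The paper does not reproduce Henke's argument, but your account correctly identifies both the stabilization mechanism and the key point that a genuine algebra isomorphism (not merely agreement of decomposition numbers) is what yields a \emph{strong} lattice isomorphism for $M^{(d-k,k)}$, $Y^{(d-k,k)}$ and $S^{(d-k,k)}$ simultaneously.
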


More general results along the same line were obtain in \cite{HenkeKoenigRelatingPolynomial}, involving adding a large power of $p$ to the first part of a partition and obtaining equality of decomposition numbers and $p$-Kostka numbers. We should warn though that Section 4 of \cite{HenkeKoenigRelatingPolynomial} has a gap.
\footnote{For example $p=2, \lambda=(4,3,1), \mu=(8), p^d=4$ gives a counterexample to Cor. 4.5} The problem is that the set of weights considered, for example in Cor. 4.1 and following, is not an ideal or coideal.

The proofs of these results involve constructing explicit isomorphisms between generalized Schur algebras in different degrees. The numerical equalities obtained are then translated to the symmetric group setting.

\section{Generic cohomology}
Only more recently have symmetric group results relating $\lambda$ and $p\lambda$ appeared. Many of these results are motivated by or suggestive of the famous generic cohomology theorem from \cite{CPSvdKrationalandgeneric}, which we describe briefly now.
For $G$ modules $M_1$ and $M_2$, there is a natural map
\begin{equation}
    \label{eq: injectiononextfrobtwist}
\Ext^i_G(M_1,M_2) \rightarrow \Ext^i_G(M_1^{(r)}, M_2^{(r)})
\end{equation}
induced by applying the Frobenius twist to the corresponding exact sequences of $G$ modules. The map \eqref{eq: injectiononextfrobtwist} is always an injection \cite[II.10.14]{jantzenbook2nded}. Thus, for fixed $i$, we have a sequence of injective maps
\begin{equation}
\label{eq: genericcohomologysequence}
\HH^i(G,M) \rightarrow \HH^i(G,M^{(1)}) \rightarrow \HH^i(G, M^{(2)}) \rightarrow \cdots.
\end{equation}

When $M$ is finite-dimensional, the sequence \eqref{eq: genericcohomologysequence} is known \cite{CPSvdKrationalandgeneric} to stabilize, and the limit is called the \emph{generic cohomology} $\HH^i_{gen}(G,M)$ of $M$. Since $G=GL_n(k)$, the sequence \eqref{eq: genericcohomologysequence} is known to stabilize immediately for $i=1$, \cite[Thm. 7.1]{CPSvdKrationalandgeneric} i.e.
\begin{equation}
\label{eq: generic1cohostabilizesimmediately}
\HH^1_{\operatorname{gen}}(G,M) \cong \HH^1(G,M).
\end{equation}
Note that \eqref{eq: H1stabilizes Llambda} is a special case. We call theorems relating cohomology of symmetric group modules  $U^\lambda$ and $U^{p\lambda}$ ``generic cohomology" or ``stability" type theorems, where $U$ can be $S, M, D, Y$. For example, a cohomology result relating $S^{p\lambda}$ and $S^{p^2\lambda}$ would be called a generic cohomology type theorem, although we should warn that \eqref{eq: twistofsimplemodule} does not hold for other natural $G$ modules. For example $\HH^0(\lambda)^{(1)}$ is always a proper submodule of $\HH^0(p\lambda).$

\section{Young modules}

For a partition $\lambda \vdash d$ there is a corresponding Young subgroup $\Sigma_\lambda \leq \Sigma_d$ and the permutation module on the cosets is denoted $M^\lambda$. The isomorphism classes of indecomposable summands of these permutation modules are also indexed by partitions of $d$ and are called Young modules, denoted $Y^\lambda$. These are important and well-studied modules. For example the set $\{Y^\lambda \mid \lambda \text{ is $p$-restricted}\}$ is a complete set of projective indecomposable $k\Sigma_d$ modules. Each $Y^\lambda$ is self-dual.  Section 4.6 of \cite{Martinbook} is a good basic reference for Young modules.

\subsection{$p$-Kostka numbers and tensor products}

We have:

\begin{equation}
\label{eq: Mlambdadecomposes}
M^\lambda \cong Y^\lambda \oplus \bigoplus_{\mu > \lambda}  [M^\lambda : Y^\mu] Y^\mu
\end{equation}
where $>$ is the lexicographic order on partitions.
The multiplicities $[M^\lambda : Y^\mu]$ in \eqref{eq: Mlambdadecomposes} are known as $p$-Kostka numbers. As we will see there are results for Young modules of both types, relating $Y^\lambda$ with $Y^{p\lambda}$ and with $Y^{\lambda + p^r \tau}.$

In fact just considering $p$-Kostka numbers, both types of results arise. In \cite{HenkeOnpKostka}, Henke determines completely the $p$-Kostka numbers when $\lambda$ has two parts. She also obtains:

\begin{thm} \cite[Theorem 6.1]{HenkeOnpKostka}
\label{thm: HenkepKostka}
Let $\lambda \vdash d$ and suppose $\lambda_1 \geq d/2$ and $\lambda_2 < p^r$. Then:
$$[M^{\lambda+(ap^r)} : Y^{\mu + (ap^r )}] = [M^{\lambda} : Y^{\mu}]$$ for every $a \geq 1$ and $\mu \vdash d$.
\end{thm}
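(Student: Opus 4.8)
The plan is to prove this as an equality of $p$-Kostka numbers by reducing it to a statement about summands of permutation modules, and then to exploit the fact that $\lambda_1 \geq d/2$ forces $\lambda$ (and $\mu$) to have a single ``large'' first part that can absorb a block of $ap^r$ columns without interacting with the rest. The key structural input is that, since $\lambda_2 < p^r$, the partition $\lambda + (ap^r)$ has its first column of length one for the last $ap^r$ columns read off from the right; more precisely, $M^{\lambda+(ap^r)}$ is obtained from $M^\lambda$ by inducing up along a Young subgroup of the form $\Sigma_d \times \Sigma_{ap^r}$, and one wants to track how Young modules behave under this induction. The natural tool is the Littlewood--Richardson-type behaviour of $p$-Kostka numbers under adding columns, together with the Klyachko--Grojnowski--Vazirani / Donkin description of Young modules via tilting modules for the Schur algebra.

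First I would translate into Schur algebra language: $[M^\lambda : Y^\mu] = [T(\lambda') : \text{(indecomposable tilting)}]$-type statements are available, but more directly, $p$-Kostka numbers are tensor-product multiplicities for tilting modules, $[M^\lambda : Y^\mu] = (T(\mu') : \nabla(\lambda'))$ or equivalently a decomposition-number statement for the Schur algebra $S(n,d)$ in the conjugate partitions. Under conjugation, $\lambda_1 \geq d/2$ becomes ``$\lambda'$ has at most $\ldots$ rows of length $\geq 2$'' — i.e. $\lambda'$ is a partition whose parts beyond the first are small in number — and $\lambda_2 < p^r$ becomes ``$\lambda'$ has fewer than $p^r$ parts of size $\geq 2$.'' Adding $(ap^r)$ to $\lambda$ corresponds to appending $ap^r$ parts equal to $1$ at the bottom of $\lambda'$, i.e. $\lambda' + (1^{ap^r})$ in the appropriate sense. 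So the claim becomes: appending a block of $ap^r$ trailing $1$'s to both $\lambda'$ and $\mu'$ leaves the relevant Schur algebra decomposition number unchanged, provided the ``$\geq 2$'' part of $\lambda'$ has length $< p^r$. This is exactly the kind of statement one proves by a row-removal / column-removal argument (James, Donkin), where removing a common ``rectangular'' piece — here a $1 \times (ap^r)$ strip of singleton parts — does not change decomposition numbers because the $p$-adic carry structure is trivial once the remaining part is short enough.

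The main obstacle, and where I would concentrate the work, is making the ``$\lambda_2 < p^r$'' hypothesis do its job: it is precisely the condition that prevents the added block $ap^r$ from combining with $\lambda_2$ to produce a carry in base $p$, so that the combinatorics of which $\mu$ occur, and with what multiplicity, is genuinely unchanged rather than merely shifted. Concretely I would (i) show that any $\mu$ with $[M^{\lambda+(ap^r)}:Y^{\mu+(ap^r)}] \neq 0$ must itself satisfy $\mu_1 \geq d/2$ and $\mu_2 < p^r$ (dominance plus the shape of $\lambda$), so that the correspondence $\mu \leftrightarrow \mu + (ap^r)$ is a bijection on the relevant index set; (ii) set up the row-removal isomorphism at the level of Schur functors or the module $M^{\lambda+(ap^r)} \downarrow_{\Sigma_d \times \Sigma_{ap^r}}$, showing $Y^{\mu+(ap^r)}$ restricts (or corestricts) to $Y^\mu \boxtimes (\text{trivial or }Y^{(ap^r)})$ up to modules that cannot contribute; and (iii) verify the power of $p$ being added guarantees $Y^{(ap^r)} = M^{(ap^r)}$ is itself the relevant ``Steinberg-like'' building block, so the tensor-product multiplicity is multiplicative. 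Step (iii) is where the appearance of a $p$-power, rather than an arbitrary integer, is essential, tying this result to the general ``twist then tensor with $\operatorname{St}_r$'' theme; if I wanted a cleaner route I would instead cite Henke's two-part $p$-Kostka computation to handle the second row explicitly and induct on the number of rows of $\lambda$, but the row-removal approach is the one I expect to generalize.
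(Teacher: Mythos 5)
Your approach is genuinely different from the one Henke actually uses: her proof rests on Klyachko's recursive multiplicity formula, which expresses $[M^\lambda : Y^\mu]$ in terms of ordinary Kostka numbers indexed by the $p$-adic digits of $\mu$, and the hypotheses $\lambda_1 \geq d/2$ and $\lambda_2 < p^r$ are there precisely so that the recursions computing $[M^\lambda:Y^\mu]$ and $[M^{\lambda+(ap^r)}:Y^{\mu+(ap^r)}]$ match up term by term. Your route through tilting modules and restriction to the Young subgroup $\Sigma_d \times \Sigma_{ap^r}$ is closer in spirit to Gill's later, stronger theorem (which does run through Young vertices and the Brou\'e correspondence), but as written your argument has gaps that keep it from being a proof of this statement.

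Concretely: step (i) is false as stated. Dominance forces $\mu_1 \geq \lambda_1 \geq d/2$, but it does \emph{not} force $\mu_2 < p^r$; for instance with $\lambda = (6,2,1,1) \vdash 10$ and $p^r = 3$ one has $\mu = (6,4) \trianglerighteq \lambda$ with $\mu_2 = 4 \geq p^r$, so the index set of relevant $\mu$ does not obviously inherit $\lambda$'s shape constraints and your proposed bijection $\mu \leftrightarrow \mu + (ap^r)$ needs a separate argument. More seriously, step (ii) is the entire content of the theorem and you have only named it: by Mackey, $M^{\lambda+(ap^r)}\!\downarrow_{\Sigma_d \times \Sigma_{ap^r}}$ is a large direct sum of outer tensor products of permutation modules, not just $M^\lambda \boxtimes k$, and likewise $Y^{\mu+(ap^r)}\!\downarrow$ has many Young summands beyond $Y^\mu \boxtimes k$; showing the others ``cannot contribute'' is precisely the Young-vertex bookkeeping one would have to carry out, and there is no off-the-shelf ``remove trailing singleton columns'' theorem for $p$-Kostka numbers to invoke --- the James--Donkin removal theorems concern common first rows and first columns, not appended tails. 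Finally, in step (iii) note that $M^{(ap^r)} = Y^{(ap^r)}$ is simply the trivial $k\Sigma_{ap^r}$-module, so nothing Steinberg-like is coming from that factor; the $p$-power actually enters via the $p$-adic expansion of $\mu$ in Klyachko's formula (equivalently, via Young vertices), not through $Y^{(ap^r)}$. Your fallback suggestion --- induct using Henke's explicit two-row $p$-Kostka computation --- is much closer to what she in fact does, and is the more promising path to a complete argument.
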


 Her proof uses the well-known multiplicity formula of Klyachko which gives a kind of recursion for $p$-Kostka numbers in terms of those for smaller partitions, where the assumptions in Theorem \ref{thm: HenkepKostka} ensures those decompositions  are closely related.

More recently in his 2011 thesis Gill proved a strengthened result:

\begin{thm}\cite[Theorem 2.24]{GillThesis} Let $\lambda, \mu \vdash d$ and $a \geq 1$. Suppose $\mu$ has $p$-adic expansion $\mu=\sum_{i=0}^s\mu(i) p^i.$ If $p^r > \operatorname{max} (p^s, \lambda_2)$, then:

$$[M^{\lambda+(ap^r)} : Y^{\mu + (ap^r )}] = [M^{\lambda} : Y^{\mu}].$$
\end{thm}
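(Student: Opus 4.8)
The plan is to set up a comparison between the module $M^{\lambda+(ap^r)}$ for $\Sigma_{d+ap^r}$ and the module $M^\lambda$ for $\Sigma_d$, tracking the indecomposable Young module summands on both sides. The natural starting point is Klyachko's multiplicity formula (used already in Henke's proof of Theorem \ref{thm: HenkepKostka}), which expresses $[M^\lambda : Y^\mu]$ recursively in terms of $p$-Kostka numbers for partitions obtained by stripping off $p$-adic layers. The key structural input is the $p$-adic expansion $\mu = \sum_{i=0}^s \mu(i)p^i$: the hypothesis $p^r > \max(p^s, \lambda_2)$ guarantees two things simultaneously, namely (i) adding $(ap^r)$ to the first row does not interact with any of the $p$-adic digits $\mu(i)$ of $\mu$ appearing below the first part, since all of those digits are supported on rows whose sizes are dwarfed by $p^r$, and (ii) the first part $\lambda_1 + ap^r$ is genuinely ``dominant enough'' that the new block $(ap^r)$ sits cleanly at scale $p^r$ in the $p$-adic picture of the whole partition. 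So the idea is that the operation $\nu \mapsto \nu + (ap^r)$ induces a bijection on the relevant index set of summands that respects Klyachko's recursion term by term.

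First I would recall the precise form of Klyachko's formula and isolate exactly which smaller $p$-Kostka numbers $[M^\lambda : Y^\mu]$ depends on; this will be a sum (or product) over ways of decomposing $\lambda$ and $\mu$ compatibly with their $p$-adic structure. Second, I would show that adding $(ap^r)$ to both $\lambda$ and $\mu$ sends each term in the decomposition of $(\lambda,\mu)$ to a term in the decomposition of $(\lambda + (ap^r), \mu + (ap^r))$, using the inequality $p^r > p^s$ to argue that the extra $(ap^r)$ can only be absorbed into the ``top'' $p$-adic layer (layer $r$ or above) and never splits across the layers $0,\dots,s$ that already carry $\mu$; this is where the bound on $s$ does its work. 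Third, I would check the reverse direction — that every decomposition of $(\lambda + (ap^r), \mu + (ap^r))$ arises this way — which is where the bound $p^r > \lambda_2$ enters, ensuring that nothing from the second row of $\lambda$ (hence of $\mu$, since $[M^\lambda:Y^\mu]\neq 0$ forces $\mu \trianglerighteq \lambda$) can climb up to scale $p^r$ and masquerade as part of the added block. Combining the two directions gives the claimed equality of multiplicities, and hence a bijection of Young module summands.

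The main obstacle I expect is the bookkeeping in the second and third steps: Klyachko's formula involves an implicit sum over multi-part data (roughly, over sequences of subpartitions at each $p$-adic scale), and one must verify that ``$+\,(ap^r)$'' is compatible with \emph{all} of that data, not just the first part. In particular the delicate point is proving that the summation ranges match exactly after the shift — i.e. that the hypothesis $p^r > \max(p^s,\lambda_2)$ is strong enough to prevent any ``carrying'' phenomenon in the $p$-adic arithmetic that would create or destroy valid decompositions. I would also want to double-check the edge cases $a \geq 2$ versus $a = 1$ (one expects an easy induction reducing general $a$ to $a=1$, since $ap^r = p^r + (a-1)p^r$ and the hypothesis is stable under this, but the induction hypothesis must be stated for the shifted pair) and the degenerate case where $\mu = \lambda$ or $s$ is small, where the formula may simplify and the claimed bijection becomes transparent.
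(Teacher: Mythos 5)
Your proposed approach — running the operation $\nu \mapsto \nu + (ap^r)$ through Klyachko's multiplicity formula — is the method Henke used, and it is \emph{not} the method Gill used. Henke's Klyachko-formula argument yields Theorem~\ref{thm: HenkepKostka}, which carries the additional hypothesis $\lambda_1 \geq d/2$ alongside $\lambda_2 < p^r$. Gill's point was precisely to drop the hypothesis $\lambda_1 \geq d/2$, and he did so by abandoning Klyachko's recursion in favor of a structural analysis of vertices of Young modules and the Brou\'e correspondence for $p$-permutation modules (following Erdmann's approach). The fact that twisting $\lambda \mapsto p\lambda$ behaves cleanly with respect to vertices is what makes his argument go, and it is also what lets him prove the companion result Theorem~\ref{thm: GillpKostkastability}, which is completely out of reach of the Klyachko recursion.

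The gap in your sketch is exactly at the point you flag as ``bookkeeping'': you never establish that the sum in Klyachko's formula for $(\lambda+(ap^r),\,\mu+(ap^r))$ matches term by term with that for $(\lambda,\mu)$ once $\lambda_1$ is allowed to be small. The hypothesis $\lambda_1 \geq d/2$ in Henke's theorem is precisely what controls which $p$-adic decompositions of $\lambda$ can contribute, and removing it opens up the possibility of ``carrying'' across $p$-adic layers that your second and third steps would need to rule out. You offer no mechanism for that, and the historical record (Henke reaching only the weaker statement with this tool) is strong evidence that the obstruction is real rather than cosmetic. One smaller slip: you invoke ``$[M^\lambda : Y^\mu] \neq 0$ forces $\mu \trianglerighteq \lambda$'' to bound $\mu_2$ by $\lambda_2$, but dominance (or lexicographic) order does not bound $\mu_2$ above by $\lambda_2$; for example $\lambda=(3,1,1,1)$, $\mu=(3,3)$ has $\mu > \lambda$ with $\mu_2 > \lambda_2$. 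The bound on the lower rows of $\mu$ in Gill's hypothesis comes from the $p$-adic expansion via $p^s$, not from $\lambda_2$, so that step of your reverse-direction argument needs repair as well.
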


Gill's techniques include an extensive analysis of Young vertices and the Brou\'{e} correspondence for $p$-permutation modules. This approach to studying Young modules was pioneered by Erdmann in \cite{ErdmannYoungmodulesforsymmetric}.  The twisting $\lambda \rightarrow p\lambda$ behaves very well with respect to the Young vertices, which Gill used to prove the following  stability result on $p$-Kostka numbers under twisting:

\begin{thm}\cite[Theorem 2.21]{GillThesis}
\label{thm: GillpKostkastability}
Let $\lambda, \mu \vdash d$. Then:
$$[M^\lambda : Y^\mu]= [M^{p\lambda} : Y^{p\mu}].$$
\end{thm}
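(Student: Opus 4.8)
The plan is to compare the two permutation modules $M^\lambda$ and $M^{p\lambda}$ using the theory of $p$-permutation modules and their Brauer quotients, following the vertex-theoretic approach of Erdmann and Gill alluded to in the text. The key observation is that a Young module $Y^\mu$ is determined among the indecomposable summands of $M^\lambda$ by its \emph{vertex} and the relevant Brauer correspondent, and that the operation $\mu \mapsto p\mu$ has an explicit and transparent effect on Young vertices: a Young vertex for $Y^\mu$ embeds into a Young vertex for $Y^{p\mu}$ in a way compatible with fixed points. So the strategy is: (i) describe $M^{p\lambda}$ as a $p$-permutation module for $\Sigma_{pd}$, identify a natural $p$-subgroup $Q$ acting on the underlying basis; (ii) compute the Brauer quotient $M^{p\lambda}[Q]$ and show it is, as a module for the appropriate relative Weyl group, expressible in terms of $M^\lambda$ (suitably interpreted inside a smaller symmetric group); (iii) match up indecomposable summands on both sides via this correspondence, thereby translating the multiplicity $[M^{p\lambda}:Y^{p\mu}]$ into $[M^\lambda : Y^\mu]$.

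First I would set up the basis combinatorics. The module $M^{p\lambda}$ has as a basis the cosets $\Sigma_{pd}/\Sigma_{p\lambda}$, equivalently the set of set-partitions of $\{1,\dots,pd\}$ into blocks of sizes $p\lambda_1, p\lambda_2, \dots$. Group the $pd$ points into $d$ consecutive blocks of size $p$, and let $Q \cong (C_p)^{?}$ — more precisely a suitable elementary abelian group (or a product of cyclic $p$-groups) — act by cyclically permuting the points within each block of size $p$. Then $M^{p\lambda}$, restricted to $Q$, is a permutation module, and its Brauer quotient $M^{p\lambda}[Q]$ can be computed: the $Q$-fixed set-partitions are exactly those that are unions of these blocks of size $p$, and these correspond bijectively to set-partitions of the $d$ blocks into groups of sizes $\lambda_1, \lambda_2, \dots$. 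Hence $M^{p\lambda}[Q] \cong M^\lambda$ as a module for $N_{\Sigma_{pd}}(Q)/Q$, which surjects onto (a copy of) $\Sigma_d$ acting on the $d$ blocks. The Brauer correspondence then gives a bijection between indecomposable $p$-permutation summands of $M^{p\lambda}$ with vertex containing $Q$ and indecomposable summands of $M^\lambda$, and under this bijection $Y^{p\mu}$ matches $Y^\mu$. Tracking multiplicities through this bijection yields the claimed equality. I would also need the complementary observation that every indecomposable summand $Y^\nu$ of $M^{p\lambda}$ with $Q$ \emph{not} subconjugate to a vertex of $Y^\nu$ contributes nothing — i.e. that all Young modules appearing in $M^{p\lambda}$ do have $Q$ in their vertex — which follows because the vertices of summands of $M^{p\lambda}$ are contained in a Sylow $p$-subgroup of $\Sigma_{p\lambda}$, and $Q$ sits inside every such vertex up to conjugacy by the block structure.

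The main obstacle I anticipate is step (ii): pinning down exactly which $p$-group $Q$ to use and verifying that the Brauer quotient $M^{p\lambda}[Q]$ is \emph{precisely} $M^\lambda$ as a module for the right quotient group, rather than merely a module with the right composition factors or the right dimension. Getting the group action on the Brauer quotient correct — so that the relative Weyl group $N_{\Sigma_{pd}}(Q)/Q$ really does induce the full $\Sigma_d$ action on the $d$ blocks and nothing spurious — requires care, as does checking that the Young-module labelling is preserved (i.e. that the Brauer correspondent of $Y^{p\mu}$ is genuinely $Y^\mu$ and not some other Young module); this last point is essentially where Gill's detailed analysis of Young vertices enters. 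Once the correspondence is established on the nose, the multiplicity equality is a formal consequence of the Krull--Schmidt theorem together with the fact that the Brauer construction is additive and sends indecomposables to indecomposables (or zero).
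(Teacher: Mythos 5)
Your overall strategy---treat $M^\lambda$ and $M^{p\lambda}$ as $p$-permutation modules, pick a $p$-subgroup $Q \cong (C_p)^d$ acting within the blocks, compute Brauer quotients, and invoke Brou\'e's correspondence---is indeed the route Gill takes (the paper explicitly attributes his proof to ``an extensive analysis of Young vertices and the Brou\'e correspondence for $p$-permutation modules''). Your computation that the $Q$-fixed tabloids of shape $p\lambda$ biject with tabloids of shape $\lambda$, giving $M^{p\lambda}[Q] \cong M^\lambda$ (inflated to $N_{\Sigma_{pd}}(Q)/Q$), is correct and is a genuine ingredient of the argument.

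However, there is a real gap in the final step. You assert that ``the Brauer construction is additive and sends indecomposables to indecomposables (or zero).'' This is false: Brou\'e's theorem says $V \mapsto V[Q]$ sends an indecomposable $p$-permutation module $V$ with vertex \emph{exactly} $Q$ to an indecomposable projective $k[N_G(Q)/Q]$-module, but when $Q$ is properly contained in a vertex of $V$, the Brauer quotient $V[Q]$ is typically decomposable. In the present situation the vertex of $Y^{p\mu}$ is $Q = (C_p)^d$ only when $\mu$ is $p$-restricted (so that $p\mu$ has $p$-adic expansion concentrated in $p$-degree one); when $\mu$ is not $p$-restricted the Young vertex of $Y^{p\mu}$ is strictly larger than $Q$, and then $Y^{p\mu}[Q]$ is generally a direct sum of several Young modules. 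Consequently, knowing $M^{p\lambda}[Q] \cong M^\lambda$ does not by itself let you read off $[M^{p\lambda}:Y^{p\mu}]$ from $[M^\lambda : Y^\mu]$ by Krull--Schmidt; one must control, for \emph{every} Young summand $Y^\nu$ of $M^{p\lambda}$, the multiplicity $[\,Y^\nu[Q] : Y^\mu\,]$ and show it equals $\delta_{\nu, p\mu}$, or else replace the single fixed $Q$ by the actual Young vertex of each $Y^{p\mu}$ and perform the comparison vertex by vertex. That bookkeeping---precisely the part you flag as requiring Gill's ``detailed analysis''---is the substance of the proof, not a formality. (As the paper also notes, a slicker route exists via Proposition~\ref{prop: weight space}: $[M^\lambda : Y^\mu] = \dim L(\mu)_\lambda$, combined with $L(p\mu)=L(\mu)^{(1)}$ and the corresponding stretching of weight spaces, but that uses $GL_n$ rather than intrinsic symmetric group arguments.)
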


We remark that an alternative proof of Theorem \ref{thm: GillpKostkastability} could be given using the general linear group and an actual Frobenius twist. This is because $p$-Kostka numbers are  equal to weight space multiplicities in simple $GL_n(k)$ modules. Namely:

\begin{prop}\cite[p. 55]{Donkingtiltingalgebraic}
\label{prop: weight space}
The $p$-Kostka number $[M^\lambda : Y^\mu]$ is the dimension of $L(\mu)^\lambda$, the $\lambda$ weight space in the simple module $L(\mu)$.
\end{prop}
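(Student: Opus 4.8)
The natural framework here is the Schur algebra. Permutation, Young and simple $k\sd$-modules depend only on $\sd$, so I would fix some $n\geq d$ and pass to $S:=S(n,d)=\End_{k\sd}(\vt)$, $V=k^n$. Let $e_\omega\in S$ be the idempotent onto the weight $\omega=(1^d)$, so that $e_\omega Se_\omega\cong k\sd$ and $f:=e_\omega(-)\colon S\text{-mod}\to k\sd\text{-mod}$ is the (exact, additive) Schur functor; for $\mu\vdash d$ let $e_\mu\in S$ be the idempotent onto the $\mu$-weight space. The plan is to rewrite the $p$-Kostka number $[M^\lambda:Y^\mu]$ as the multiplicity of the indecomposable projective $P(\mu)$ (the projective cover of $L(\mu)$) as a summand of the projective module $Se_\lambda$, and then to read off that multiplicity as a weight-space dimension.

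Concretely, I would invoke two standard properties of the Schur functor on these particular modules: $f(Se_\lambda)\cong M^\lambda$, and $f(P(\mu))\cong Y^\mu$ for every $\mu\vdash d$ (both are classical; see e.g.\ \cite{Donkingtiltingalgebraic}, and \cite{ErdmannYoungmodulesforsymmetric} for the second). Decomposing the projective $S$-module as $Se_\lambda\cong\bigoplus_\mu P(\mu)^{\oplus m_\mu}$ with $m_\mu=[Se_\lambda:P(\mu)]$ and applying the additive functor $f$ gives $M^\lambda\cong\bigoplus_\mu (Y^\mu)^{\oplus m_\mu}$; since the $Y^\mu$ are indecomposable, Krull--Schmidt forces $[M^\lambda:Y^\mu]=m_\mu$. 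Next, for any projective $P$ one has $\di\Hom_S(P,L(\mu))=[P:P(\mu)]$ (because $\End_S(L(\mu))=k$), and there is a natural isomorphism $\Hom_S(Se_\lambda,N)\cong e_\lambda N$, $\varphi\mapsto\varphi(e_\lambda)$, identifying $\Hom_S(Se_\lambda,-)$ with the $\lambda$-weight-space functor $(-)^\lambda$. Chaining these,
\[
[M^\lambda:Y^\mu]=[Se_\lambda:P(\mu)]=\di\Hom_S(Se_\lambda,L(\mu))=\di e_\lambda L(\mu)=\di L(\mu)^\lambda,
\]
which is the assertion.

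The step that needs real care --- and the one I would expect to be the main obstacle --- is the pair of identifications $f(Se_\lambda)\cong M^\lambda$ and $f(P(\mu))\cong Y^\mu$, specifically the fact that they are \emph{conjugate-free}: the Schur-functor bijection between indecomposable projectives of $S$ and Young modules genuinely sends the partition $\mu$ to $\mu$, with no transpose or Mullineux twist intervening. Establishing this cleanly requires fixing at the outset the relevant Schur-algebra conventions (which of $\Delta(\lambda),\nabla(\lambda)$ is declared the standard module, equivalently whether $Se_\lambda$ is realised as a divided-power or as a symmetric-power module) and tracking them consistently; checking a small case such as $p=2$, $d=2$ is a useful safeguard. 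Everything else is formal: exactness and additivity of $f$, Krull--Schmidt, and the elementary identity $\Hom_S(Se_\lambda,-)\cong(-)^\lambda$.
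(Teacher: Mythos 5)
Your argument is correct. The paper itself does not prove this statement---it is given as a citation to Donkin---so there is no ``paper's proof'' to compare against; your Schur-algebra argument is precisely the standard one that Donkin's reference encodes. The chain
\[
[M^\lambda:Y^\mu]=[Se_\lambda:P(\mu)]=\di\Hom_S(Se_\lambda,L(\mu))=\di e_\lambda L(\mu)=\di L(\mu)^\lambda
\]
is exactly right, and you correctly isolate the two nonformal inputs: $f(Se_\lambda)\cong M^\lambda$ and $f(P(\mu))\cong Y^\mu$ with no label shift. For the first, it is worth recording that what falls out naturally is $e_\omega Se_\lambda\cong\Hom_{k\Sigma_d}(M^\lambda,k\Sigma_d)\cong(M^\lambda)^{*}$, so the self-duality of permutation modules is what closes the loop. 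For the second, the conjugate-free matching $P(\mu)\mapsto Y^\mu$ is indeed the content of James/Erdmann on Young modules and $p$-Kostka numbers, and the hypothesis that $k$ is algebraically closed is silently used in $\End_S(L(\mu))=k$ and in Krull--Schmidt. No gaps.
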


But there is no obvious way to use transfer the proof of Proposition \ref{prop: weight space} to give a short symmetric group proof of Theorem \ref{thm: GillpKostkastability}.

\subsection{Young module cohomology} If one thinks of $Y^{p\lambda}$ as a twist of $Y^\lambda$ then the following theorem can be interpreted as a generic cohomology theorem for Young modules, valid in arbitrary degree:

\begin{thm}\cite[Theorem 13.2.1]{CHNYoungmodulecohomology}
\label{thm: Youngmodulegenericcohomology} Fix $i>0$ and let $p$ be
arbitrary. Then there exists $s(i)>0$ such that for any $d$ and
$\lambda \vdash d$ we have
$$\HH^i(\Sigma_{p^{a}d},Y^{p^{a}\lambda}) \cong \HH^i(\Sigma_{p^{a+1}d},Y^{p^{a+1}\lambda}) $$ whenever $a \geq s(i)$.
\end{thm}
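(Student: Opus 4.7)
The plan is to pass from Young modules to the better-understood permutation modules and induct on the dominance order, using three stability inputs: Shapiro's lemma with the Künneth formula, Nakaoka's classical stability theorem for $\HH^i(\Sigma_n,k)$, and Gill's $p$-Kostka stability (Theorem \ref{thm: GillpKostkastability}).

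First I would prove the analogous stability for permutation modules. By Shapiro's lemma,
$$\HH^i(\Sigma_{p^ad},M^{p^a\lambda})\cong \HH^i(\Sigma_{p^a\lambda_1}\times\cdots\times\Sigma_{p^a\lambda_n},k),$$
and Künneth expands this as the degree-$i$ piece of $\bigotimes_j \HH^*(\Sigma_{p^a\lambda_j},k)$. Nakaoka's theorem says $\HH^j(\Sigma_m,k)$ depends only on $j$ once $m\geq 2j$, so once $a$ exceeds some threshold $s_0(i)$ depending only on $i$ and $p$, every factor stabilizes in all degrees $\leq i$, uniformly in $d$ and $\lambda$. Thus $\HH^i(\Sigma_{p^ad},M^{p^a\lambda})\cong\HH^i(\Sigma_{p^{a+1}d},M^{p^{a+1}\lambda})$ for $a\geq s_0(i)$.

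Next, for fixed $d$, I would induct downward on the dominance order on partitions of $d$. The most dominant case $\lambda=(d)$ gives $M^{(d)}=Y^{(d)}=k$, where the result is exactly Nakaoka's theorem. For the inductive step, decompose
$$M^{p^a\lambda}\ \cong\ \bigoplus_{\mu\unrhd\lambda}[M^{p^a\lambda}:Y^{p^a\mu}]\,Y^{p^a\mu}\ \oplus\ X_a,$$
where $X_a$ collects all summands whose label is not of the form $p^a\mu$ for any $\mu\vdash d$. Gill's Theorem \ref{thm: GillpKostkastability} forces $[M^{p^a\lambda}:Y^{p^a\mu}]=[M^\lambda:Y^\mu]$, independent of $a$, and solving for $\HH^i(\Sigma_{p^ad},Y^{p^a\lambda})$ expresses it as a fixed integer combination of $\HH^i(\Sigma_{p^ad},M^{p^a\lambda})$, the groups $\HH^i(\Sigma_{p^ad},Y^{p^a\mu})$ for $\mu\rhd\lambda$, and $\HH^i(\Sigma_{p^ad},X_a)$.

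The hard step is controlling the stray summands $X_a$. I would invoke Erdmann's vertex theory for Young modules: the vertex of $Y^{p^a\mu}$ is a specific iterated wreath product of Sylow $p$-subgroups, and any Young summand of $M^{p^a\lambda}$ whose label is not of this $p^a$-divisible shape is forced, via a vertex comparison inside the Brou\'e-permutation structure of $M^{p^a\lambda}$, to have a vertex whose cohomology vanishes in degrees $\leq i$ once $a$ is large. Combining this with the induction lets me set $s(i)=\max(s_0(i),s_1(i))$, where $s_1(i)$ is the vertex threshold, and close the argument. Making that vertex bound depend only on $i$ and not on $d$ or $\lambda$ is the main technical obstacle; a cleaner route, if achievable, would be to prove outright that $X_a=0$ for $a$ large, so that $M^{p^a\lambda}$ decomposes purely into the twisted Young modules $Y^{p^a\mu}$ indexed by $\mu\vdash d$.
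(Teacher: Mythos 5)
Your approach is not the one the paper uses, and it contains a genuine unresolved gap. The cited result \cite[Theorem 13.2.1]{CHNYoungmodulecohomology} is proved by transporting the statement, via the Schur functor, to a question about $\Ext$ groups between induced modules for $GL_n(k)$, which is then settled with the algebraic-topological machinery of that paper; the text immediately after the theorem says as much, and Problem~\ref{prob: realizingisoYoungcoho} explicitly asks whether a purely symmetric-group proof (like the one you are proposing) can be given. So a completed version of your argument would be a new contribution, not a reconstruction.

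The first half of your plan is fine: Shapiro plus K\"unneth plus Nakaoka stability does give $\HH^i(\Sigma_{p^ad},M^{p^a\lambda})\cong\HH^i(\Sigma_{p^{a+1}d},M^{p^{a+1}\lambda})$ for $a\geq s_0(i)$ uniformly in $d,\lambda$, and downward induction on dominance, starting from $\lambda=(d)$, is the right skeleton. The gap is precisely where you say it is, and it is not a technicality. Your ``cleaner route'' $X_a=0$ is false: for $p=3$, $\lambda=(1,1)$, $a=1$ one has $[M^{(3,3)}:Y^{(4,2)}]=\dim L(4,2)^{(3,3)}=1$, and $(4,2)$ is not $3\mu$ for any $\mu\vdash 2$. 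That particular stray happens to be projective since $(4,2)$ is $3$-restricted, so it contributes nothing, but non-projective strays also occur: for $p=2$, $\lambda=(2,1,1)$, $a=1$, the module $L(5,2,1)\cong L(3,2,1)\otimes L(1)^{(1)}$ has a nonzero $(4,2,2)$-weight space (coming from the zero weight of the Steinberg module $L(3,2,1)$), so $Y^{(5,2,1)}$ is a summand of $M^{(4,2,2)}$; its $2$-adic expansion is $(3,2,1)+2(1,0,0)$, so it has nontrivial vertex and is not projective. Moreover the vertices of the ``good'' summands $Y^{p^a\mu}$ grow without bound with $a$ (indeed $Y^{p^a\mu}$ has maximal complexity by \cite[3.3.2]{HNsupportvariety}), so there is no clean dichotomy ``good summands have big vertices, strays have small ones'' to exploit; you would need a genuinely new vanishing or stability statement for $\HH^i(\Sigma_{p^ad},X_a)$, uniform in $d$ and $\lambda$ and depending only on $i$, and nothing in the proposal supplies that. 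Until that step is filled in, the argument does not close.
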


Here we have our first example of what will be a common theme, a vector space isomorphism of cohomology but without any map realizing the isomorphism. Indeed the proof of Theorem \ref{thm: Youngmodulegenericcohomology} proceeds by using Schur functor techniques to translate the symmetric group cohomology problem to a representation theory problem for $G$. That in turn is solved using powerful algebraic topology techniques, and ``reverse engineering" the answer gives the isomorphism; but one cannot trace back to find a symmetric group proof or explicit realization of the isomorphism. For example if $p=2$ and $i=1$ we can compute $s(i)=1$, i.e. \cite[Theorem 12.4.1]{CHNYoungmodulecohomology} gives:

\begin{equation}
\label{eq: youngp=2iso}
\Ext^1_{\Sigma_{2d}}(k, Y^{2\lambda}) \cong \Ext^1_{\Sigma_{4d}}(k, Y^{4\lambda}).
\end{equation}
This leads us to ask:
\begin{prob}
\label{prob: realizingisoYoungcoho} Can you prove \eqref{eq: youngp=2iso} purely using symmetric group representation theory? Can you give an explicit map that takes a short exact sequence $0 \rightarrow Y^{2\lambda} \rightarrow U \rightarrow k \rightarrow 0$ and produces the corresponding one for $Y^{4\lambda}$?
\end{prob}

Recall that $\HH^i(G, M) \cong \Ext^i_G(k, M)$. This suggests a natural generalization of Theorem \ref{thm: Youngmodulegenericcohomology} would be to consider $\Ext^i_{\Sigma_d}(Y^\lambda, Y^\mu) \cong \HH^i(\Sigma_d, Y^\mu \otimes Y^\lambda)$.

Already the $i=0$ case of this problem is extremely difficult. Indeed knowing the dimension of $\operatorname{Hom}_{k\Sigma_d}(Y^\lambda, Y^\mu)$ for
all $\lambda, \mu \vdash d$ is equivalent \cite[Proposition 9.2.1]{CHNYoungmodulecohomology} to knowing the decomposition matrix for the Schur algebra $S(d,d)$, and thus contains more information than computing the decomposition matrix for the symmetric group, a notoriously intractable problem. While computing the actual dimensions is beyond reach, Gill managed to prove:

\begin{thm}\cite[Theorem 4.3]{GillTensorProduct}
\label{thm: Gill homYoungmodule}
Let $\lambda, \mu \vdash d$. Then:

$$\dim_k\Hom_{k\Sigma_{d}}(Y^{\lambda}, Y^{\mu}) \leq
\dim_k\Hom_{k\Sigma_{pd}}(Y^{p\lambda}, Y^{p\mu}).$$
\end{thm}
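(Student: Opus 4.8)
The plan is to relate both sides to $p$-Kostka numbers via the decomposition of tensor products of Young modules into Young modules, and then invoke Theorem~\ref{thm: GillpKostkastability}. First I would recall that since each $Y^\lambda$ is a $p$-permutation module (a summand of a permutation module $M^\lambda$), the tensor product $Y^\lambda \otimes Y^\mu$ is again a $p$-permutation module, hence decomposes as a direct sum of Young modules $\bigoplus_\nu c^\nu_{\lambda,\mu} Y^\nu$ with multiplicities $c^\nu_{\lambda,\mu} \geq 0$. Because $Y^\nu$ is self-dual and indecomposable, Fitting's lemma gives $\dim_k \Hom_{k\Sigma_d}(Y^\lambda, Y^\mu) = \dim_k \Hom_{k\Sigma_d}(k, Y^\lambda \otimes Y^\mu) = \sum_{\nu}\, c^\nu_{\lambda,\mu}\, \dim_k \Hom_{k\Sigma_d}(k, Y^\nu)$, and $\dim_k \Hom_{k\Sigma_d}(k, Y^\nu)$ counts the trivial summands of $Y^\nu$, i.e. it is essentially a $p$-Kostka number $[M^{(d)}:Y^\nu]$ up to the standard bookkeeping (it equals $1$ when $Y^\nu$ has a trivial summand, which happens precisely in controlled cases). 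So the whole Hom-dimension is a nonnegative integer combination of $p$-Kostka-type data attached to $\lambda,\mu$.

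Next I would establish the key comparison: the twisting operation $\nu \mapsto p\nu$ sends $Y^\lambda \otimes Y^\mu$-summands to $Y^{p\lambda}\otimes Y^{p\mu}$-summands with at least the same multiplicity, i.e. $c^{p\nu}_{p\lambda,p\mu} \geq c^\nu_{\lambda,\mu}$. The cleanest route is the one suggested by the surrounding text: use that $p$-Kostka numbers are weight-space multiplicities in simple $GL_n$-modules (Proposition~\ref{prop: weight space}) and that the Frobenius twist satisfies $L(\mu)^{(1)}\cong L(p\mu)$ together with $L(\mu)^{(1)}\otimes \mathrm{St}_1$ being a tilting module whose character controls the passage from weight $\lambda$ to weight $p\lambda$; alternatively, and more in keeping with Gill's own methods, use that the Young vertex of $Y^\nu$ behaves predictably under $\nu\mapsto p\nu$ (as already used for Theorem~\ref{thm: GillpKostkastability}) so that the Broué correspondent computation for $Y^{p\lambda}\otimes Y^{p\mu}$ dominates the untwisted one. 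Either way, Theorem~\ref{thm: GillpKostkastability} is the prototype: it gives $[M^\lambda:Y^\mu]=[M^{p\lambda}:Y^{p\mu}]$, and I expect the tensor-product multiplicities to inherit a $\leq$ (rather than $=$) because twisting can create new summands supported on partitions not of the form $p\nu$.

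Assembling: writing $D_d := \dim_k \Hom_{k\Sigma_d}(Y^\lambda, Y^\mu)$, I would show $D_d = \sum_\nu c^\nu_{\lambda,\mu}\,\epsilon_\nu$ where $\epsilon_\nu \in \{0,1\}$ records whether $Y^\nu$ contains a trivial summand, and that $\nu\mapsto p\nu$ preserves the property $\epsilon_\nu = 1$ (again a twisting statement on $p$-Kostka numbers, namely $[M^{(d)}:Y^\nu]=[M^{(pd)}:Y^{p\nu}]$, a special case of Theorem~\ref{thm: GillpKostkastability}). Combined with $c^{p\nu}_{p\lambda,p\mu}\geq c^\nu_{\lambda,\mu}$ and nonnegativity of all the $c$'s, this yields $D_d = \sum_\nu c^\nu_{\lambda,\mu}\epsilon_\nu \leq \sum_\nu c^{p\nu}_{p\lambda,p\mu}\epsilon_{p\nu} \leq \sum_{\nu'} c^{\nu'}_{p\lambda,p\mu}\epsilon_{\nu'} = D_{pd}$, which is the claim.

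The main obstacle is the inequality $c^{p\nu}_{p\lambda,p\mu}\geq c^\nu_{\lambda,\mu}$ on tensor-product decomposition multiplicities — this is the genuine content, everything else is formal. Proving it requires either a careful vertex-and-source analysis showing that a Young module with twisted label $p\nu$ occurs as a summand of $Y^{p\lambda}\otimes Y^{p\mu}$ whenever $Y^\nu$ occurs in $Y^\lambda\otimes Y^\mu$ (tracking how the Broué correspondence and the relevant Sylow/defect data scale under $d \mapsto pd$), or a $GL_n$-side argument translating the tensor product of Young modules into a tensor product of tilting modules and using that Frobenius twist is a (non-full) tensor embedding on characters. I would expect to follow Gill's vertex-theoretic framework, since Theorem~\ref{thm: GillpKostkastability} is proved there and the tensor product result should be a refinement of the same analysis; the delicate point will be that one only gets $\leq$, so I must be careful not to over-claim equality and must identify exactly which summands can fail to be of twisted type.
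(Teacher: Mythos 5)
Your plan is essentially the paper's: decompose $Y^\lambda\otimes Y^\mu$ into Young modules via Mackey, use that $\dim_k\Hom_{k\Sigma_d}(Y^\lambda,Y^\mu)$ counts the summands $Y^\nu$ with a trivial submodule (each contributes exactly $1$ since $\Hom(k,Y^\nu)$ is at most one-dimensional), use that the set of $\nu$ with this property is closed under $\nu\mapsto p\nu$, and finish with the tensor-product multiplicity comparison — which is exactly how the paper sketches Gill's argument, citing Gill's Theorem 3.6 (the paper's Theorem~\ref{thm: Gill tensorproduct}) as the key ingredient.

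One clarification on where the inequality actually arises. You hedge and guess that the tensor-product multiplicities only satisfy $c^{p\nu}_{p\lambda,p\mu}\geq c^\nu_{\lambda,\mu}$, ``because twisting can create new summands.'' In fact Gill's Theorem 3.6 is an equality: $[Y^\lambda\otimes Y^\mu:Y^\nu]=[Y^{p\lambda}\otimes Y^{p\mu}:Y^{p\nu}]$ for all $\nu\vdash d$. The $\leq$ in the final theorem does not come from a deficit in the multiplicities; it comes from the second inequality in your assembled display, $\sum_\nu c^{p\nu}_{p\lambda,p\mu}\,\epsilon_{p\nu}\leq\sum_{\nu'}c^{\nu'}_{p\lambda,p\mu}\,\epsilon_{\nu'}$, because $Y^{p\lambda}\otimes Y^{p\mu}$ may have summands $Y^{\nu'}$ with a trivial submodule where $\nu'$ is not of the form $p\nu$. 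Your final chain of (in)equalities is correct as written, so this is only a misattribution of where the slack lives — but it matters if one wanted to pin down when equality holds. As you note, the one genuinely nontrivial input (Gill's multiplicity result) is left unproved in your sketch, just as the paper cites it rather than reproving it.
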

Mackey's theorem easily implies that $Y^\lambda \otimes Y^\mu$ is a direct sum of Young modules. The proof of Theorem \ref{thm: Gill homYoungmodule} uses fact that $\dim_k\Hom_{k\Sigma_{d}}(Y^{\lambda}, Y^{\mu})$ is the number of summands in $Y^\lambda \otimes Y^\mu$ which have a trivial submodule. Since $Y^\lambda \subseteq M^\lambda$ it is clear that $\Hom_{k\Sigma_d}(k, Y^\lambda)$ is at most one-dimensional. The $\lambda$ for which it is nonzero are known (see \cite[Proposition 12.1.1]{CHNYoungmodulecohomology} for example), and these partitions are preserved under twisting. Then the following key theorem from \cite{GillTensorProduct} is used to complete the proof.

\begin{thm}\cite[Thoerem 3.6]{GillTensorProduct}
\label{thm: Gill tensorproduct}
Let $\lambda, \mu, \tau \vdash d$. Then:
$$[Y^\lambda \otimes Y^\mu : Y^\tau] = [Y^{p\lambda} \otimes Y^{p\mu} : Y^{p\tau}].$$
\end{thm}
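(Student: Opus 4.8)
The plan is to realize the twisting operation $\lambda \mapsto p\lambda$ on Young modules through the Schur functor machinery and an honest Frobenius twist on the general linear group side, so that the statement about multiplicities in tensor products of Young modules becomes a statement about tensor products of tilting modules (equivalently, of injective modules $\HH^0(\lambda)$) for $G = GL_n(k)$. Recall that under the Schur functor $f\colon \mo S(n,d) \to \mo k\Sigma_d$ (for $n \geq d$) one has $f(T(\lambda)) = Y^{\lambda'}$, where $T(\lambda)$ is the indecomposable tilting module and $\lambda'$ the conjugate partition; since the Schur functor is exact and sends indecomposable tiltings to indecomposable Young modules (with $f$ reflecting direct sum decompositions faithfully for $n \geq d$), the multiplicity $[Y^\lambda \otimes Y^\mu : Y^\tau]$ equals $[T(\lambda') \otimes T(\mu') : T(\tau')]$ computed in the tilting-module category, because tensor products of tilting modules are again tilting. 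So the first step is to reduce the symmetric group statement to the identity $[T(\alpha) \otimes T(\beta) : T(\gamma)] = [T(p\alpha) \otimes T(p\beta) : T(p\gamma)]$ for partitions $\alpha, \beta, \gamma$ with at most $n$ parts, after replacing $n$ by a suitably large $N$ so that $p\lambda, p\mu, p\tau$ still have at most $N$ parts.

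The second step is to understand how tilting modules behave under the Frobenius twist. In general $T(\lambda)^{(1)}$ is \emph{not} $T(p\lambda)$, which is the crux of the difficulty: the twist of an indecomposable tilting module is indecomposable but typically not tilting. The tool I would reach for is Donkin's tensor product theorem for tilting modules: if $\lambda = (p-1)\rho + p\mu$ (a restricted part plus $p$ times something), then $T(\lambda) \cong \operatorname{St}_1 \otimes T(\mu)^{(1)}$, and more generally $\operatorname{St}_r \otimes T(\mu)^{(r)}$ is tilting with highest weight $(p^r-1)\rho + p^r\mu$. This is the tilting-module incarnation of \eqref{eq: steinbergforinduced}. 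The strategy is then: tensor everything in sight with a fixed high Steinberg module $\operatorname{St}_r$ (with $p^r$ large compared to all the partitions involved and to $n$), use that $\operatorname{St}_r$ is its own dual and that $\operatorname{St}_r \otimes \operatorname{St}_r$ decomposes as a tilting module with a controllable, $r$-independent combinatorial description of its summands, and observe that $\operatorname{St}_r \otimes (-)^{(r)}$ induces a multiplicity-preserving correspondence between tilting modules $T(\nu)$ with $\nu$ having at most $n$ parts and tilting summands $T((p^r-1)\rho + p^r\nu)$. Relating $(p^r - 1)\rho + p^r \nu$ to $p\nu$ (rather than $p^r\nu$) requires iterating this one step at a time and bookkeeping the $\rho$-shift, so I would actually prove the single-step identity $[T(\alpha)\otimes T(\beta):T(\gamma)] = [T(p\alpha + (p-1)\rho) \otimes \cdots]$ and then argue that the stable behaviour of the $\rho$-shift cancels out in the multiplicity count, yielding the clean $\lambda \mapsto p\lambda$ statement once translated back through the Schur functor (where the $\rho$-shift interacts with conjugation in a way that has to be checked).

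The main obstacle, as indicated, is precisely that the Frobenius twist does not commute with ``being tilting,'' so one cannot simply twist the defining short exact sequences; the Steinberg-tensoring trick repairs this but at the cost of a $\rho$-shift that must be shown to be irrelevant to the multiplicities, and at the cost of carefully tracking conjugate partitions through all the identifications. An alternative, perhaps cleaner route — and the one I would try first in parallel — is to bypass tilting modules and work directly on the symmetric group side with Young vertices: by Erdmann's and Gill's analysis (Theorem~\ref{thm: GillpKostkastability} and its proof), the vertex and Brou\'e correspondent of $Y^\lambda$ transform predictably under $\lambda \mapsto p\lambda$, and since $Y^\lambda \otimes Y^\mu$ is a $p$-permutation module (a sum of Young modules by Mackey), one can compute $[Y^\lambda \otimes Y^\mu : Y^\tau]$ locally, at the normalizer of a vertex, where the twisting becomes transparent. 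The hard part there is controlling the Green/Brou\'e correspondence for tensor products simultaneously in $\Sigma_d$ and $\Sigma_{pd}$ and matching up the local data; but if that matches, the multiplicity equality follows immediately, and this is likely the shortest honest proof.
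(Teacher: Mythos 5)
Your first approach (tilting modules and the Schur functor) has a fatal gap at the very first step. The Schur functor $f\colon \operatorname{mod}S(n,d)\to \operatorname{mod}k\Sigma_d$ is a functor between module categories in a single fixed degree $d$, whereas the tensor product $T(\lambda')\otimes T(\mu')$ of two polynomial $GL_n$-modules of degree $d$ is a polynomial module of degree $2d$, living in $\operatorname{mod}S(n,2d)$. The Schur functor sends it to a $k\Sigma_{2d}$-module, not to the \emph{inner} (Kronecker) tensor product $Y^\lambda\otimes Y^\mu$ over $k\Sigma_d$. These are simply unrelated operations: for example with $\lambda=\mu=(d)$ one has $Y^\lambda\otimes Y^\mu=k$ as a $k\Sigma_d$-module, while $T(\lambda')\otimes T(\mu')\cong\Lambda^d V\otimes\Lambda^d V$ gives a $k\Sigma_{2d}$-module under $f$. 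So the claimed identity $[Y^\lambda\otimes Y^\mu:Y^\tau]=[T(\lambda')\otimes T(\mu'):T(\tau')]$ is false, and the entire Steinberg-tensoring/$\rho$-shift machinery built on top of it does not get off the ground. (A smaller issue: tilting modules $T(\lambda)$ are not the same as the induced modules $\HH^0(\lambda)$, so the parenthetical ``(equivalently, of injective modules $\HH^0(\lambda)$)'' is also wrong.) This is not a repairable bookkeeping problem; inner tensor products of $\Sigma_d$-modules have no clean analogue in the polynomial $GL_n$ world — in characteristic zero this is exactly the notoriously hard Kronecker coefficient problem.

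Your second approach — Young vertices and the Brou\'e correspondence for $p$-permutation modules — is the right direction and is indeed aligned with Gill's actual techniques; the paper explicitly says Gill's work on these stability results rests on ``an extensive analysis of Young vertices and the Brou\'e correspondence for $p$-permutation modules'' in the tradition of Erdmann. The key structural fact you correctly rely on implicitly is that the Brauer construction commutes with tensor product for $p$-permutation modules: $(M\otimes N)(Q)\cong M(Q)\otimes N(Q)$, which together with Brou\'e's parametrization of indecomposable summands reduces the multiplicity $[Y^\lambda\otimes Y^\mu:Y^\tau]$ to counting projective summands of $Y^\lambda(Q_\tau)\otimes Y^\mu(Q_\tau)$ over $N_{\Sigma_d}(Q_\tau)/Q_\tau$. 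However, as stated this is only a sketch: the genuinely hard part is to describe $Y^\lambda(Q)$ for $Q$ ranging over Young $p$-subgroups that are \emph{not} the vertex of $Y^\lambda$, and to exhibit a compatible correspondence between the pairs $(Q_\tau, N_{\Sigma_d}(Q_\tau)/Q_\tau)$ and $(Q_{p\tau}, N_{\Sigma_{pd}}(Q_{p\tau})/Q_{p\tau})$ that identifies the relevant Brauer quotients and projective modules. That combinatorial/local-structural analysis is exactly what the paper means by ``proved numerically by counting multiplicities'' and is where the content of Gill's argument lies; your proposal names the route but does not carry out the work.

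Summary: route one is wrong and cannot be fixed; route two is correct in outline and matches the known method, but the crucial local analysis is missing.
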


This theorem is proved numerically by counting multiplicities, but looks like it should come from some explicit twist map!

Several obvious questions arise:

\begin{prob}
\label{prob: is Gill an isomorphism}
Theorem \ref{thm: Gill homYoungmodule} implies a sequence of injections
$$0 \rightarrow \Hom_{k\Sigma_d}(Y^\lambda, Y^\mu) \rightarrow \Hom_{k\Sigma_{pd}}(Y^{p\lambda}, Y^{p\mu}) \rightarrow \Hom_{k\Sigma_{p^2d}}(Y^{p^2\lambda}, Y^{p^2\mu}) \rightarrow \cdots.$$
Does this sequence stabilize? How many twists does it take to do so?
\end{prob}

\begin{prob}
\label{prob: realizeGillmap}
Can one find an explicit embedding of $\Hom_{k\Sigma_d}(Y^\lambda, Y^\mu)$  into $ \Hom_{k\Sigma_{pd}}(Y^{p\lambda}, Y^{p\mu})$?
\end{prob}

\begin{prob}
\label{prob: Gill for higher ext} Can Theorem \ref{thm: Gill homYoungmodule} be extended to $\Ext^i_{k\Sigma_d}(Y^\lambda, Y^\mu)$ for $i>0$, perhaps with more twists required as $i$ grows in the spirit of Theorem \ref{thm: Youngmodulegenericcohomology}?
\end{prob}

\section{Specht modules}
\label{sec: Specht modules}
The Specht modules $S^\lambda$ are perhaps the most well-studied among all $k\Sigma_d$ modules.  They are a complete set of irreducible ${\mathbb C}\Sigma_d$ modules, but they are defined over any field and are not well understood over $k$. For example only quite recently was it proven which remain irreducible over $k$ \cite{FayersirredSpechttypeA}. Computing the homomorphism space $\Hom_{k\Sigma_d}(S^\lambda, S^\mu)$ is an active area of research. Cohomology $\HH^i(\Sigma_d, S^\lambda)$ was worked out in degree $i=0$ more than thirty years ago in \cite[24.4]{Jamesbook}, but the $i=1$ case remains open. Recent results for Specht modules involve both twisting $\lambda \rightarrow p\lambda$ and $\lambda \rightarrow \lambda+p^a\tau$.

\subsection{Homomorphisms between Specht modules and decomposable Specht modules}There is quite a large literature on homomorphisms between Specht modules, for example Carter-Payne maps, row removal theorems, etc.  When $p>2$ it is known that $S^\lambda$ is indecomposable and $\Hom_{k\Sigma_d}(S^\lambda, S^\lambda) \cong k$. In 1980 Murphy \cite{MurphyOnDecomposability} analyzed the hook Specht modules $S^{(d-r, 1^r)}$ in characteristic $p=2$ and discovered they can have arbitrarily many indecomposable summands, so the dimension of $\Hom_{k\Sigma_d}(S^\lambda, S^\lambda)$ can be arbitrarily large. Only in 2011 were such homomorphism spaces with dimension larger than one discovered in odd characteristic \cite{DodgeLargeDimensionHomoSpace}, \cite{LyleLargeDimension}. Dodge's examples are found in Rouquier blocks while Lyle finds explicitly examples with dimension two, then uses row and column removal theorems to get arbitrary dimension. In line with the theme of this paper we observe that the examples from \cite[Theorem 1.2]{LyleLargeDimension} are of the form:

\begin{equation}
\label{eq: Lyleiso}
\Hom_{k\Sigma_{d+3ap}}(S^{\lambda + a(p,p,p)}, S^{\mu + a(p,p,p)}).
\end{equation}

Lyle suspects the spaces in \ref{eq: Lyleiso} are all two-dimensional but does not prove this. If so it would give another example of the $\lambda \rightarrow \lambda + p^r\tau$ twisting. However she constructs the maps individually for each choice of $a$ rather than, for example, by ``twisting" the $a=1$ case, so this is somewhat speculative at this point.

We collect Murphy's results below. Recall from \cite[6.7]{Jamesbook} that $S^\lambda \otimes \sgn \cong (S^{\lambda '})^*$. Since the sign representation is trivial in characteristic two, the assumption $d \geq 2r$ below does not impose a real restriction, all possible hooks are handled.

\begin{thm}
\label{thm: Murphyomnibus}Let $p=2$ and assume $d \geq 2r$.
\begin{enumerate}
  \item If $d$ is even then $S^{(d-r, 1^r)}$ is indecomposable and $\dim \Hom_{k\Sigma_d}(S^{(d-r, 1^r)}, S^{(d-r, 1^r)})=1.$
  \item If $d$ is odd and $r$ is even then $\dim \Hom_{k\Sigma_d}(S^{(d-r, 1^r)}, S^{(d-r, 1^r)})=r/2.$
  \item  If $d$ is odd and $r$ is odd then $\dim \Hom_{k\Sigma_d}(S^{(d-r, 1^r)}, S^{(d-r, 1^r)})=(r+1)/2.$

      \item If $d$ is odd then $S^{(d-r, 1^r)}$ is indecomposable if and only if $d-r-1 \equiv 0 \operatorname{mod} 2^L$ where $2^{L-1} \leq r < 2^L.$
\end{enumerate}
\end{thm}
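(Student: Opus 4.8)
The plan is to realize the hook Specht modules inside the exterior algebra of the natural permutation module and then translate every assertion about $\End_{k\Sigma_d}(S^{(d-r,1^r)})$ into a question about permutation modules in characteristic~$2$ and the base-$2$ arithmetic of $r$ and $d-r-1$. Concretely, let $E=M^{(d-1,1)}$ be the $d$-dimensional natural permutation module and $\phi\colon E\to k$, $e_i\mapsto 1$, its augmentation. Contracting against $\phi$ gives a differential $\partial\colon\bigwedge^{r}E\to\bigwedge^{r-1}E$ with $\partial^{2}=0$; since $\phi$ splits, this is a Koszul complex and is exact in every degree. I would use the classical identification $S^{(d-r,1^r)}\cong\ker\bigl(\partial\colon\bigwedge^{r}E\to\bigwedge^{r-1}E\bigr)$ (for $r=1$ this $\ker$ is exactly the sum-zero subspace $S^{(d-1,1)}$), and exactness then produces short exact sequences
$$0\longrightarrow S^{(d-r-1,1^{r+1})}\longrightarrow\textstyle\bigwedge^{r+1}E\longrightarrow S^{(d-r,1^r)}\longrightarrow 0.$$
Because $p=2$ the wedge sign disappears, so each $\bigwedge^{r+1}E$ is honestly a permutation module, namely the one on $(r+1)$-subsets, i.e. $\operatorname{Ind}_{\Sigma_{r+1}\times\Sigma_{d-r-1}}^{\Sigma_d}k$. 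Thus the hook Specht modules are precisely the ``Koszul layers'' of this chain of permutation modules.

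The two parities then split the problem. If $d$ is odd, $\phi$ is nonzero on the all-ones vector, so $E\cong k\oplus V$ with $V=D^{(d-1,1)}$ the natural simple module, $\partial$ becomes the obvious ``forget the $k$-component'' map, and the sequences degenerate into isomorphisms $S^{(d-r,1^r)}\cong\bigwedge^{r}V$. Hence for $d$ odd the whole theorem amounts to computing $\End_{k\Sigma_d}\!\bigl(\bigwedge^{r}D^{(d-1,1)}\bigr)$ and deciding when it is indecomposable. If $d$ is even, $E$ is instead uniserial, $k\mid D^{(d-1,1)}\mid k$, the all-ones vector now lying in $S^{(d-1,1)}$, and no trivial summand splits off; here I would prove part~(1) by showing that $S^{(d-r,1^r)}$ has \emph{simple} socle occurring with multiplicity one among its composition factors --- simple socle makes it indecomposable, and multiplicity one then forbids nilpotent endomorphisms, so $\End=k$ --- inducting on $r$ from the case $r=1$ via the displayed sequence and the known Loewy structure of $\operatorname{Ind}_{\Sigma_{r+1}\times\Sigma_{d-r-1}}^{\Sigma_d}k$ in characteristic~$2$.

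For the remaining (odd) parts I would compute $\End_{k\Sigma_d}(\bigwedge^{r}V)$ either (a) by restriction to $\Sigma_{d-1}$, where $V\!\downarrow_{\Sigma_{d-1}}\cong M^{(d-2,1)}$ and hence $\bigl(\bigwedge^{r}V\bigr)\!\downarrow_{\Sigma_{d-1}}\cong\operatorname{Ind}_{\Sigma_{r}\times\Sigma_{d-1-r}}^{\Sigma_{d-1}}k$ is a permutation module with classically understood endomorphism algebra, then isolating the endomorphisms that commute with the extra point; or (b) by applying $\Hom_{k\Sigma_d}(-,S^{(d-r,1^r)})$ to the displayed sequences and using Frobenius reciprocity to rewrite $\Hom_{k\Sigma_d}\!\bigl(\bigwedge^{r+1}E,\,S^{(d-r,1^r)}\bigr)$ as the invariants of $S^{(d-r,1^r)}\!\downarrow_{\Sigma_{r+1}\times\Sigma_{d-r-1}}$, which carries a Specht filtration (only hook pieces occur) governed by the Littlewood--Richardson rule, and then inducting. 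In either route the dimension count collapses to counting trivial constituents and intersection numbers, and the outcome is governed by the carries produced when $r$ (or $r+1$) is added to $d-r-1$ in base~$2$; this is exactly the source of the threshold $2^{L}$ with $2^{L-1}\le r<2^{L}$ in part~(4) and of the formulas $r/2$, $(r+1)/2$ in parts~(2)--(3). I expect the main obstacle to be not any single homological step but this combinatorial core: one must pin down the socle and radical series of $\operatorname{Ind}_{\Sigma_{r+1}\times\Sigma_{d-r-1}}^{\Sigma_d}k$ and of $\bigwedge^{r}D^{(d-1,1)}$ in characteristic~$2$ and show they interact compatibly along the Koszul differential. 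Murphy's original argument bypasses the exterior-algebra packaging, working with explicit polytabloid bases and the branching short exact sequence relating $S^{(d-r,1^r)}\!\downarrow_{\Sigma_{d-1}}$ to $S^{(d-r-1,1^r)}$ and $S^{(d-r,1^{r-1})}$, building the homomorphisms by hand and verifying the recursion in $d$ and $r$ directly; that approach meets the same combinatorial heart, now phrased as a careful double induction tracking binary expansions, and is probably the most self-contained way to complete the proof.
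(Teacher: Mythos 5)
The paper does not give its own proof of this theorem: it states it as a summary of Murphy's 1980 results and cites \cite{MurphyOnDecomposability} directly, so there is no ``paper proof'' to match against. That said, your Koszul-complex/exterior-power packaging is a well-known and sound way to organize hook Specht modules in characteristic $2$: the identification $S^{(d-r,1^r)}\cong\ker(\iota_\phi\colon\bigwedge^{r}E\to\bigwedge^{r-1}E)$, the short exact sequences coming from exactness, and the isomorphism $S^{(d-r,1^r)}\cong\bigwedge^{r}D^{(d-1,1)}$ when $d$ is odd are all correct and do reduce the odd case to computing $\End_{k\Sigma_d}\bigl(\bigwedge^{r}D^{(d-1,1)}\bigr)$. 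One small slip: exactness of the Koszul complex does not come from $\phi$ splitting as a $\Sigma_d$-module map --- that splitting fails precisely when $d$ is even --- it is automatic for any nonzero linear functional over a field via the contraction/multiplication homotopy, and that homotopy need not be equivariant. Your phrasing would leave the even case unsupported, but the conclusion is fine.

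The genuine gap is that the proposal stops exactly where Murphy's theorem actually lives. Everything you set up converts the problem into ``compute $\dim\End_{k\Sigma_d}(\bigwedge^{r}D^{(d-1,1)})$ and decide locality,'' but you then assert that the answer ``collapses to counting trivial constituents and intersection numbers'' and ``is governed by the carries,'' which is precisely the content of parts (2)--(4) and is not carried out. None of the specific outputs --- the formulas $r/2$ and $(r+1)/2$, the fact that they depend only on $r$ and not on $d$, and the exact congruence condition in (4) with its threshold $2^{L-1}\le r<2^{L}$ --- is derived. In particular, for (4) you would need both directions: an upper bound on the socle forcing indecomposability when the congruence holds, and an explicit nontrivial idempotent (or a decomposition of $\bigwedge^{r}D^{(d-1,1)}$) when it fails. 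Your sketch for (1) via ``simple socle of multiplicity one'' is a correct strategy if one can establish that socle claim, but it too is asserted, not proved. So what you have is a clean scaffolding around the theorem rather than a proof of it; as you yourself note, Murphy's branching-sequence and polytabloid argument supplies the missing combinatorial core, and absent an actual computation of the endomorphism algebra of $\bigwedge^{r}D^{(d-1,1)}$ your route does not reach the stated numbers.
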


From Theorem \ref{thm: Murphyomnibus} we can extract the following twisting result:

\begin{prop}
\label{prop: twistingp=2}
Let $p=2$ and $d \geq 2r$. Then:

\begin{enumerate}
\item $\dim \Hom_{k\Sigma_d}(S^{(d-r, 1^r)}, S^{(d-r, 1^r)})= \dim \Hom_{k\Sigma_d}(S^{(d-r+2, 1^r)}, S^{(d-r+2, 1^r)}).$

\item Suppose $2^{L-1} \leq r < 2^L$. Then $S^{(d-r, 1^r)}$ is indecomposable if and only if $S^{(d-r+2^L, 1^r)}$ is.

\end{enumerate}
\end{prop}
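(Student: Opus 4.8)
The plan is to read both statements off Theorem~\ref{thm: Murphyomnibus} directly, via a short case analysis on the parity of $d$; there is no new content beyond careful bookkeeping. The guiding observation is that the only data appearing in Murphy's formulas --- the parity of $d$, the integer $r$ (hence also its parity), and the residue of $d-r-1$ modulo $2^L$ --- are all left unchanged by the shifts in question, namely $d \mapsto d+2$ in part (1) (note $(d-r+2,1^r)\vdash d+2$) and $d \mapsto d+2^L$ in part (2) (note $(d-r+2^L,1^r)\vdash d+2^L$). I would first record that the hypothesis that the degree is at least $2r$ is inherited by $d+2$ and by $d+2^L$, so that Theorem~\ref{thm: Murphyomnibus} applies equally to the shifted partitions.

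For part (1), split on the parity of $d$. If $d$ is even, then $d+2$ is even, and part (1) of Theorem~\ref{thm: Murphyomnibus} gives that both homomorphism spaces are one-dimensional. If $d$ is odd and $r$ is even, then $d+2$ is odd and $r$ is still even, and part (2) gives dimension $r/2$ on both sides; if $d$ and $r$ are both odd, then part (3) gives $(r+1)/2$ on both sides. In each case the two dimensions agree.

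For part (2), note first that $2^{L-1}\leq r$ together with $r\geq 1$ forces $L\geq 1$, so $2^L$ is even and $d+2^L$ has the same parity as $d$. If $d$ is even, part (1) of Theorem~\ref{thm: Murphyomnibus} shows that both $S^{(d-r,1^r)}$ and $S^{(d-r+2^L,1^r)}$ are indecomposable, so the equivalence holds trivially. If $d$ is odd, part (4) of Theorem~\ref{thm: Murphyomnibus} applies to both partitions with the same value of $L$, yielding the criteria $d-r-1\equiv 0 \pmod{2^L}$ and $(d+2^L)-r-1\equiv 0 \pmod{2^L}$ respectively; these two congruences differ by $2^L$, hence are equivalent.

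I do not expect a genuine obstacle: the proposition is a formal consequence of Murphy's theorem. The only steps deserving a moment's attention are verifying that the degree hypothesis survives each shift, noticing in part (1) that the ``$d$ odd'' case of Theorem~\ref{thm: Murphyomnibus} subdivides according to the parity of $r$ (which the shift does not change), and noticing in part (2) that the hypothesis $2^{L-1}\leq r$ is precisely what guarantees $L\geq 1$, hence that $2^L$ is even and the parity of $d$ is preserved under adding $2^L$.
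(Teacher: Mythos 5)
Your proof is correct and is exactly the argument the paper has in mind: the paper presents Proposition~\ref{prop: twistingp=2} as something ``we can extract'' from Theorem~\ref{thm: Murphyomnibus}, and your case analysis on the parity of $d$ (and, where needed, of $r$), together with the observation that the shifts $d\mapsto d+2$ and $d\mapsto d+2^{L}$ preserve all the relevant invariants (parity of $d$, the value of $r$, the residue of $d-r-1$ modulo $2^{L}$, and the hypothesis that the degree is at least $2r$), is precisely that extraction. You also correctly note the small slip in the paper's statement that $(d-r+2,1^{r})$ is a partition of $d+2$, not $d$, so the Hom-space on the right of part (1) is over $k\Sigma_{d+2}$.
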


Since a module is indecomposable if and only if its endomorphism algebra is local, we conclude from Proposition \ref{prop: twistingp=2}(2) that the vector space isomorphisms in Proposition \ref{prop: twistingp=2}(1) are not, in general, algebra isomorphisms.

In \cite{DodgeFayers}, Dodge and Fayers discovered new infinite series of decomposable Specht modules in characteristic two, the first new examples since Murphy's 1980 paper. Again in their series we see the twisting $\lambda \rightarrow \lambda +p^r\tau$ occurring. For example a special case of Theorem 3.1 in \cite{DodgeFayers} is:

\begin{prop}
Let $p=2$. Then $S^{(4+4n,3,1,1)}$ is decomposable for $n \geq 0$.
\end{prop}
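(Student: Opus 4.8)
The aim is to produce a non-trivial idempotent in $\End_{k\Sigma_d}(S^\lambda)$, where $\lambda=(4+4n,3,1,1)$, $d=9+4n$ and $\operatorname{char}k=2$; equivalently, since $S^\lambda$ is finite-dimensional, to show $\End_{k\Sigma_d}(S^\lambda)$ is not local. It is enough to exhibit one endomorphism $\theta$ of $S^\lambda$ that is neither invertible nor nilpotent: then $k[\theta]\cong k[x]/\bigl(x^a g(x)\bigr)$ with $a\geq 1$ and $g$ coprime to $x$ of positive degree, so by the Chinese Remainder Theorem $k[\theta]$, and hence $\End_{k\Sigma_d}(S^\lambda)$, contains a non-trivial idempotent. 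I also record that in characteristic $2$ the identity $S^\lambda\otimes\sgn\cong(S^{\lambda'})^*$ collapses to $S^\lambda\cong(S^{\lambda'})^*$, so decomposability of $S^{(4+4n,3,1,1)}$ is equivalent to that of $S^{(4,2,2,1^{1+4n})}$, and one may work with whichever is more convenient.

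To construct $\theta$, I would choose a partition $\mu$ obtained from $\lambda$ by moving a single box between two rows --- e.g.\ $\mu=(4+4n,4,1)$ or $\mu=(5+4n,3,1)$ --- and take Carter--Payne-type homomorphisms $\phi\in\Hom_{k\Sigma_d}(S^\lambda,S^\mu)$ and $\psi\in\Hom_{k\Sigma_d}(S^\mu,S^\lambda)$. In characteristic $2$ such maps are available for suitable $\mu$: a single-box Carter--Payne move is governed by a $p$-power divisibility condition on a leg-length, which is easy to satisfy when $p=2$, and the opposite map can be obtained either directly or as the dual (via the self-duality above) of the corresponding map for conjugate partitions. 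Both maps are written down on the standard polytabloid basis as James-type ``one-node'' homomorphisms, and one sets $\theta=\psi\circ\phi$.

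The substantive step is to evaluate $\theta$ on a small explicit family of standard polytabloids and check that $\theta\neq 0$, that $\operatorname{im}\theta$ is a proper submodule (so $\theta$ is not invertible), and that $\theta$ is not nilpotent (e.g.\ that $\theta$ is nonzero modulo the radical on some polytabloid, or that $\theta^2\neq 0$). Since every box that is moved, and every Garnir relation invoked in the reduction to standard form, lives in rows $2$--$4$ and at the right-hand end of row $1$, the coefficient bookkeeping does not see the length of the first row, and a single calculation should settle all $n\geq 0$ at once; alternatively one can first apply a first-row-removal theorem for $\Hom$ spaces between Specht modules to reduce to the case $n=0$, i.e.\ to $S^{(4,3,1,1)}$ for $\Sigma_9$, and verify that (essentially finite) case directly.

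I expect this last verification to be the main obstacle. Composites of Carter--Payne homomorphisms frequently vanish, or are scalar multiples of the identity, and either outcome is useless here (the latter would even point, misleadingly, toward indecomposability); producing a genuine non-unit, non-nilpotent $\theta$ needs a careful sign-and-coefficient analysis with polytabloids and Garnir relations, and this is where the real work of \cite{DodgeFayers} is concentrated. A secondary difficulty is making the argument uniform in $n$: one must either confirm the hypotheses of the row-removal reduction, or else carry the (mild) dependence on $n$ through the polytabloid computation and check it does not disturb the conclusion.
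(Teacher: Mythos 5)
The survey gives no proof of this proposition; it is simply quoted as a special case of \cite[Theorem~3.1]{DodgeFayers}. Your outline does match the general strategy of that paper: show $\End_{k\Sigma_d}(S^\lambda)$ is not local by exhibiting an endomorphism $\theta$ that is neither invertible nor nilpotent, build $\theta$ by composing one-node (Carter--Payne type) homomorphisms through a nearby Specht module, and use $S^\lambda\cong(S^{\lambda'})^*$ in characteristic~$2$ to pass freely between $(4+4n,3,1,1)$ and $(4,2,2,1^{1+4n})$. That much is sound, and your minimal-polynomial argument for extracting an idempotent from such a $\theta$ is correct.

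The gap is that the proposal stops exactly where the work begins. No $\mu$ is actually fixed, no $\phi$, $\psi$, or $\theta=\psi\phi$ is written down, and nothing is computed; you acknowledge that checking $\theta$ is a non-unit and non-nilpotent is ``where the real work of \cite{DodgeFayers} is concentrated,'' but that check \emph{is} the theorem. Composites of one-node maps very often vanish or are scalar multiples of the identity, so a generic choice of $(\phi,\psi)$ has no reason to succeed; the maps have to be engineered, and a genuine polytabloid/Garnir computation carried out, to pin down the minimal polynomial of the composite. The suggested shortcut for uniformity in $n$ also does not exist off the shelf: the known first-row-removal theorems for $\Hom$ between Specht modules delete the first row from \emph{both} partitions (under size hypotheses) and do not identify $\End(S^{(4+4n,3,1,1)})$ across varying $n$. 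A statement of that ``add $4n$ to the first row'' shape is exactly the kind of stability this survey is cataloguing, not a lemma one may invoke for free. So the $n$-dependence has to be carried through the explicit calculation, which has not been done.
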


We can ask much more generally:
\begin{prob}
\label{prob: homaddpa}
Find general theorems relating $\Hom_{k\Sigma_d}(S^\lambda, S^\mu)$ with the space $\Hom_{k\Sigma_d}(S^{\lambda+(p^a)}, S^{\mu+(p^a)})$. More generally, inspired by Lyle's work, we could ask for homomorphism results relating $\lambda + p^r\tau$ and $\mu + p^r\tau$ for more general $\tau.$
\end{prob}

\noindent The work in \cite{HenkeKoenigRelatingPolynomial} mentioned in Section \ref{sec: Morita} may be relevant here for Problem \ref{prob: homaddpa}

As for comparing $\Hom_{k\Sigma_d}(S^\lambda, S^\mu)$ with $\Hom_{k\Sigma_d}(S^{p\lambda}, S^{p\mu})$ in hopes of a result like Theorem \ref{thm: Gill homYoungmodule}, the following example suggests some caution. The computations were done in GAP4 \cite{GAP4} using code written by Matthew Fayers.

\begin{exm}
\label{exm: homstabilitySpechtfails}Let $p=3$. Then:

\begin{eqnarray*}
  \dim \Hom_{k\Sigma_9}(S^{(7,1,1)}, S^{(3,1^6)}) &=  & 0 \\
  \dim \Hom_{k\Sigma_{27}}(S^{(21,3,3)}, S^{(9,3^6)}) &=  & 1 \\
\dim \Hom_{k\Sigma_{81}}(S^{(63,9,9)}, S^{(27,9^6)}) &=  & 0. \end{eqnarray*}
\end{exm}

\subsection{Generic cohomology for Specht modules} We recently proved a generic cohomology type result for Specht modules. The proof proceeds by translating the problem to $GL_n(k)$ using the result of Kleshchev and Nakano \cite[6.3(b)]{KNcomparingcohom}that:

\begin{equation}
\label{eq: Spechtcoho}
\HH^i(\Sigma_d, S^\lambda) \cong \Ext^i_{GL_d(k)}(\HH^0(d), \HH^0(\lambda), \,\, 0 \leq i \leq 2p-4.
\end{equation}
Using extensive knowledge on the structure of $\HH^0(d)$ worked out by Doty \cite{DotysubmodulestructureWeylmodules} together with knowledge of cohomology for the Borel subgroup $B$ and its Frobenius kernel $B_r$, we applied the Lyndon-Hochschild-Serre spectral sequence to obtain:

\begin{thm}
\label{thm: gencohoSpecht}
Let $p \geq 3$ and $\lambda \vdash d$. Then
$$\HH^1(\Sigma_{pd}, S^{p\lambda}) \cong \HH^1(\Sigma_{p^2d}, S^{p^2\lambda}).$$
\end{thm}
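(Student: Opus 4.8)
The plan is to exploit the isomorphism \eqref{eq: Spechtcoho} of Kleshchev--Nakano, which in degree $i=1$ (valid since $p \geq 3$ forces $2p-4 \geq 2 > 1$) identifies $\HH^1(\Sigma_{d}, S^\lambda)$ with $\Ext^1_{GL_d(k)}(\HH^0(d), \HH^0(\lambda))$. Applying this for $pd$ and for $p^2 d$ reduces the theorem to proving the isomorphism
\begin{equation*}
\Ext^1_{GL_{pd}(k)}(\HH^0(pd), \HH^0(p\lambda)) \cong \Ext^1_{GL_{p^2d}(k)}(\HH^0(p^2 d), \HH^0(p^2\lambda)).
\end{equation*}
One immediate subtlety: the two sides live over $GL_{pd}(k)$ and $GL_{p^2d}(k)$ respectively, so there is genuinely no Frobenius twist map in sight; the argument must instead compute both sides. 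The point of departure is Doty's explicit description \cite{DotysubmodulestructureWeylmodules} of the submodule structure of the one-part induced module $\HH^0(n)$ (which as a $GL_n$-module is just the $n$th symmetric power $S^n(E)$ of the natural module); in particular $\HH^0(n)$ is a uniserial-type module with composition factors controlled by the $p$-adic digits of $n$, so $\HH^0(pd)$ and $\HH^0(p^2 d)$ have closely related filtrations.

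The concrete mechanism I would use is the following. First reduce each $\Ext^1$ over $GL_n$ to an $\Ext^1$ (or a low-degree piece of a long exact sequence) involving a \emph{simple} bottom constituent $L(n)$ of $\HH^0(n)$, using Doty's filtration and the long exact sequences in $\Ext^*(-, \HH^0(\lambda))$; the higher constituents contribute controllably or not at all for $i=1$ after twisting. Next, pass from $GL_n$ to its Borel subgroup $B$: there is a restriction/induction adjunction $\HH^0(\lambda) = \operatorname{ind}_B^G k_\lambda$, and $\Ext^1_G(L(n), \operatorname{ind}_B^G k_\lambda)$ is linked via the Lyndon--Hochschild--Serre spectral sequence for $B_r \trianglelefteq B$ (with quotient a torus) to $B_r$-cohomology of weight modules. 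It is exactly here that the stability $\lambda \to p\lambda \to p^2\lambda$ becomes visible: the Frobenius kernel cohomology $\HH^\bullet(B_r, k_\mu)$ is periodic/stable in the relevant range once $\mu$ is divisible by a sufficiently large power of $p$, and the $r$ we need grows only mildly. Combining the $B_r$-computation with the $T$-action (taking $T$-fixed points of the relevant weight space) and then re-assembling via the spectral sequence yields a formula for $\HH^1(\Sigma_{p^a d}, S^{p^a\lambda})$ that is manifestly independent of $a$ for $a \geq 1$; specializing to $a=1$ and $a=2$ gives the theorem.

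The main obstacle I anticipate is bookkeeping the Doty filtration of $\HH^0(p^a d)$ together with the linkage/weight constraints so as to be sure that \emph{no new} $\Ext^1$ contributions appear when $a$ increases from $1$ to $2$ — i.e., that every constituent of $\HH^0(p^a d)$ other than the "stable" one either already stabilized or else contributes nothing in cohomological degree $1$ against $\HH^0(p^a \lambda)$. A secondary technical point is controlling the differentials in the LHS spectral sequence for $B_r \trianglelefteq B$: one needs that the $E_2^{1,0}$ and $E_2^{0,1}$ pieces (rational $T$-cohomology of $B_r$-cohomology) assemble cleanly, which should follow from the torus acting semisimply, but the weight combinatorics must be checked to rule out a surviving differential that could break the isomorphism. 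I would expect the argument to also show more (e.g., stability for all $a \geq 1$ in one stroke), but stating just the $a=1$ versus $a=2$ case keeps the hypotheses minimal.
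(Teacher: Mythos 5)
Your strategy matches the paper's stated approach precisely: translate to $GL$ via the Kleshchev--Nakano isomorphism \eqref{eq: Spechtcoho}, exploit Doty's description of the submodule structure of $\HH^0(d)$, and then analyze the relevant $\Ext^1$ by passing to the Borel $B$ and its Frobenius kernel $B_r$ via the Lyndon--Hochschild--Serre spectral sequence. The anticipated obstacles you flag (bookkeeping Doty's filtration so no new $\Ext^1$ contributions appear, and controlling LHS differentials) are indeed exactly the technical work, so this is the same route the paper takes.
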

Theorem \ref{thm: gencohoSpecht} can be interpreted as a generic cohomology theorem for Specht modules in degree one. We remark that the same result holds for $\HH^0(\Sigma_d, S^\lambda)$, although somewhat trivially as $\Hom_{k\Sigma_{pd}}(k, S^{p\lambda})$ is always zero unless $\lambda=(d).$

It is natural to ask if this extends to a more general theorem like Theorem \ref{thm: Youngmodulegenericcohomology}, that is:

\begin{prob}
\label{prob: gencohoSpec}
\label{prob: genericcohoinalldegrees}
Fix $i>0$. Is there a constant $c(i)$ such that for any $d$ with $\lambda \vdash d$ and any $a \geq c(i)$ that
$$\HH^i(\Sigma_{p^ad}, S^{p^a\lambda}) \cong \HH^i(\Sigma_{p^{a+1}d}, S^{p^{a+1}\lambda})?$$
\end{prob}

As in the case of the Young module cohomology, the proof of Theorem \ref{thm: gencohoSpecht} leaves one unable to produce an explicit map, so we can ask:

\begin{prob}
Given an element $0 \rightarrow S^{p\lambda} \rightarrow M \rightarrow k \rightarrow 0$ in $\HH^1(\Sigma_{pd}, S^{p\lambda})$, can one explicitly construct an extension of $S^{p^2\lambda}$ by $k$ realizing the isomorphism in Theorem \ref{thm: gencohoSpecht}?
\end{prob}

We also proved a generic cohomology result of the other variety. Namely:
\begin{thm}
\label{thm: stabilityaddingpr}
Let $\lambda \vdash d$ and $p^r>d$. Then:

$$\HH^1(\Sigma_d, S^\lambda) = \HH^1(\Sigma_{d+p^r}, S^{\lambda+(p^r)}).$$

\end{thm}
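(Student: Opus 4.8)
The plan is to translate the symmetric group cohomology into $GL_n(k)$ cohomology via the Kleshchev--Nakano isomorphism \eqref{eq: Spechtcoho}, which in degree $i=1$ requires only $p \geq 3$ and gives
$$\HH^1(\Sigma_d, S^\lambda) \cong \Ext^1_{GL_d(k)}(\HH^0(d), \HH^0(\lambda)).$$
Applying the same isomorphism on the right-hand side of the asserted equality — now working inside $GL_{d+p^r}(k)$ — reduces the theorem to proving
$$\Ext^1_{G}(\HH^0(d), \HH^0(\lambda)) \cong \Ext^1_{G'}(\HH^0(d+p^r), \HH^0(\lambda+(p^r))),$$
where $G = GL_d(k)$ and $G' = GL_{d+p^r}(k)$ (or, more uniformly, working with $GL_n(k)$ for $n$ large and using that polynomial $\Ext$ is independent of $n$ in this range). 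So everything comes down to understanding how adding a single part $p^r$ to both weights affects the relevant $\Ext^1$.

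The key input is the detailed submodule structure of $\HH^0(d)$ due to Doty, together with the behavior of induced modules under adding $p^r\tau$. The crucial structural fact is \eqref{eq: steinbergforinduced}: $\HH^i((p^r-1)\rho) \otimes \HH^i(\tau)^{(r)} \cong \HH^i((p^r-1)\rho + p^r\tau)$, and more relevantly the simpler observation that for a one-row weight, $\HH^0((p^r)) \cong \HH^0(p^r)$ is itself built from Frobenius twists. First I would set up the Lyndon--Hochschild--Serre spectral sequence for the Frobenius kernel $G_r \trianglelefteq G$, as in the proof of Theorem \ref{thm: gencohoSpecht}, reducing $\Ext^1_G$ to a combination of $G_r$-cohomology (which by projectivity/injectivity of Steinberg-type modules over $G_r$ often vanishes or collapses) and $\Ext^1$ over $G/G_r \cong G$; the condition $p^r > d$ is exactly what forces the weight $\lambda$ to be "small" relative to $p^r$, so that the extra box $(p^r)$ is a pure $r$-th Frobenius twist of a box $(1)$ and the $G_r$-invariants sort out cleanly. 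The point of $p^r > d$ is that $\lambda$ has all parts $< p^r$, so $\lambda + (p^r)$ has $p$-adic digits in its parts that simply append a leading digit $1$ coming from the new row; this is precisely the regime where "tensor with a Steinberg twist" combinatorics applies and $\HH^0(\lambda+(p^r))$ relates to $\HH^0(\lambda)$ up to a twist by $\HH^0((p^r)) \cong \operatorname{St}$-type data.

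The main obstacle, as in Theorem \ref{thm: gencohoSpecht}, will be controlling the spectral sequence differentials and the $B_r$-cohomology (Borel Frobenius kernel) contributions: one must show that the extra row $p^r$ does not introduce new $\Ext^1$ classes and does not kill existing ones. Concretely, the hard part is verifying that the relevant piece of $\HH^1(B_r, -)$ or $\HH^0(G/G_r, \HH^1(G_r,-))$ is unchanged when $d \rightsquigarrow d+p^r$ and $\lambda \rightsquigarrow \lambda + (p^r)$ — this requires knowing that Doty's filtration of $\HH^0(d+p^r)$ is obtained from that of $\HH^0(d)$ by the expected twist-and-shift, and that no low-degree cohomology is hidden in the new composition factors. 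I would expect the argument to produce an \emph{equality} (indeed a canonical isomorphism coming from the Frobenius twist on $GL_n(k)$-modules after translation), not merely an abstract dimension count, which is why the statement is phrased with $=$; but as with the other results surveyed here, tracing this isomorphism back to an explicit symmetric group construction remains out of reach.
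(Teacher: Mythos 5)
Your proposal identifies exactly the ingredients the paper credits for this pair of results: the Kleshchev--Nakano isomorphism \eqref{eq: Spechtcoho} to pass to $GL_n(k)$, Doty's determination of the submodule structure of $\HH^0(d)$, and the Lyndon--Hochschild--Serre spectral sequence for the (Borel's) Frobenius kernel, with the hypothesis $p^r>d$ guaranteeing that the extra $p$-adic digit in the first part does not interact with the digits already present. Since this is a survey paper that only sketches the method and does not display the spectral sequence computation, your outline is essentially the same route; like the paper, you punt on the actual verification that the $B_r$-cohomology and the differentials are unchanged by $\lambda\rightsquigarrow\lambda+(p^r)$, which is where all the real work lies.

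Two cautions worth flagging. First, the notation $\lambda+(p^r)$ denotes componentwise addition of partitions, so the result is $(\lambda_1+p^r,\lambda_2,\ldots,\lambda_s)$ --- you are enlarging the first part, not appending a new row; your language about a ``new row'' and an ``extra box'' suggests the latter, though your $p$-adic digit reasoning is still correct since $p^r>d\ge\lambda_1$. Second, the operation $\lambda\mapsto\lambda+(p^r)$ is \emph{not} a Frobenius twist, and $\HH^0(p^r)$ is a symmetric power, not a Steinberg module, so I would not expect the isomorphism to be ``canonical, coming from the Frobenius twist'' in the sense of \eqref{eq: injectiononextfrobtwist}; indeed the paper explicitly emphasizes that these symmetric-group stability isomorphisms are obtained by dimension count and that no explicit realizing map is known.
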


Finally we ask for stronger results like that of Theorem \ref{thm: stabilityaddingpr}.

\begin{prob}
\label{prob: resultsaddingprmu}
Let $\lambda \vdash d$ and $\mu \vdash c$. Can one find more results that relate the cohomology $\HH^i(\Sigma_d, S^\lambda)$ and $\HH^i(\Sigma_{d+cp^r}, S^{\lambda + p^r\mu})$?
\end{prob}

We end this section by mentioning a different sort of stability result that holds for $\HH^0(\Sigma_d, S^\lambda)$ and, in all examples we have computed, also for $\HH^1(\Sigma_d, S^\lambda)$. For an integer $t$ let $l_p(t)$ be the least nonnegative integer
satisfying $t<p\,^{l_p(t)}$.
The following is an easy consequence of Theorem 24.4 in \cite{Jamesbook}:

\begin{lem}
Suppose $\lambda=(\lambda_1, \lambda_2, \ldots, \lambda_s)\vdash d$ and suppose $a \equiv -1 \operatorname{mod }  p^{l_p(\lambda_1)}$. Then
\begin{equation}
\label{eq:Homstabilizesaddingcong-1asnewfirstrow}
\HH^0(\Sigma_d, S^\lambda) \cong \HH^0(\Sigma_{d+a}, S^{(a,\lambda_1, \lambda_2, \ldots, \lambda_s)}).
\end{equation}
\end{lem}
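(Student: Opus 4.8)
The plan is to unwind the definitions of the relevant hom-spaces using James's classical description of $\HH^0$ for Specht modules, and then check that the combinatorial data defining it is unchanged when we prepend a new first row of the prescribed size. Recall that $\HH^0(\Sigma_d, S^\lambda) = \Hom_{k\Sigma_d}(k, S^\lambda)$ is the fixed-point space $(S^\lambda)^{\Sigma_d}$, and Theorem 24.4 in \cite{Jamesbook} identifies this space in terms of the $p$-adic expansions of the parts of $\lambda$: it is nonzero (and then one-dimensional, spanned by a specific ``sum of polytabloids'' element) precisely when a certain arithmetic condition on the partial sums $\lambda_i + \lambda_{i+1} + \cdots$ relative to powers of $p$ holds. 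So the first step is to write down that condition explicitly for $\lambda$ and for the enlarged partition $(a, \lambda_1, \ldots, \lambda_s)$.

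Next I would carry out the comparison. The partition $(a,\lambda_1,\ldots,\lambda_s)$ has the same parts as $\lambda$ except for the new first part $a$, so every partial-sum condition from James's criterion that involves only $\lambda_2,\ldots,\lambda_s$ (i.e. the original $\lambda_1,\ldots,\lambda_s$) is literally the same in both cases. The only genuinely new constraint is the one governing the interface between the new first row of size $a$ and the old first row of size $\lambda_1$; this is exactly where the hypothesis $a \equiv -1 \pmod{p^{l_p(\lambda_1)}}$ enters. The point is that $l_p(\lambda_1)$ is chosen so that $\lambda_1 < p^{l_p(\lambda_1)}$, hence $a + \lambda_1 \equiv \lambda_1 - 1 \pmod{p^{l_p(\lambda_1)}}$ has no carry past the $l_p(\lambda_1)$-th $p$-adic digit relative to $\lambda_1$ alone, so the ``James digit condition'' at that junction is automatically satisfied and imposes nothing new. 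Thus the criterion holds for the enlarged partition if and only if it holds for $\lambda$, and in both cases the fixed-point space is at most one-dimensional, giving the claimed isomorphism of vector spaces.

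The main obstacle, and the place that requires genuine care rather than routine bookkeeping, is matching up precisely the hypotheses of James's Theorem 24.4: that theorem is stated with specific congruence/$p$-adic-digit conditions, and one must verify that $a \equiv -1 \pmod{p^{l_p(\lambda_1)}}$ is exactly the hypothesis needed to make the new row transparent to the criterion — in particular that no smaller power of $p$ would suffice and that the choice $l_p(\lambda_1)$ (least $l$ with $\lambda_1 < p^l$) is the right threshold. One should also confirm there is no subtlety when $\lambda_1 = 0$ is impossible (so $l_p(\lambda_1)\geq 1$) and when $a$ happens to be very large, i.e.\ that the isomorphism is indeed an honest equality of dimensions and not merely an inequality. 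Once the criterion is seen to be insensitive to this modification, the lemma follows immediately, with no need to exhibit an explicit map — consistent with the recurring theme of this section that these ``adding a power of $p$'' stabilizations are visible numerically but not (yet) structurally.
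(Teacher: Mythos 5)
Your overall plan — reduce to James's Theorem 24.4 and check that the criterion for a trivial submodule is preserved under prepending a new first row — is exactly the intended argument (the paper itself calls the lemma ``an easy consequence of Theorem 24.4''). However, you mis-state the criterion, and the misstatement leads you into an unnecessary and slightly muddled detour.

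James's Theorem 24.4 does not involve partial sums $\lambda_i + \lambda_{i+1} + \cdots$. It says: $\Hom_{k\Sigma_d}(k, S^\lambda)$ is one-dimensional if $\lambda_i \equiv -1 \pmod{p^{l_p(\lambda_{i+1})}}$ holds for every pair of \emph{consecutive} nonzero parts, and is zero otherwise. With this in hand the argument is immediate and needs no talk of carries: passing from $\lambda = (\lambda_1,\ldots,\lambda_s)$ to $(a,\lambda_1,\ldots,\lambda_s)$ leaves all the old consecutive-pair conditions $\lambda_i \equiv -1 \pmod{p^{l_p(\lambda_{i+1})}}$ untouched and introduces exactly one new condition at the top, namely $a \equiv -1 \pmod{p^{l_p(\lambda_1)}}$ — which is precisely the lemma's hypothesis and so is automatically satisfied. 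Hence the two criteria are equivalent, both spaces have the same dimension (either $0$ or $1$), and the claimed vector-space isomorphism follows. Your computation ``$a + \lambda_1 \equiv \lambda_1 - 1 \pmod{p^{l_p(\lambda_1)}}$ has no carry'' is not where the hypothesis is used; it is a red herring coming from the incorrect ``partial sums'' formulation. The conclusion you reach is nevertheless correct, so this is an error in recalling the cited theorem rather than a gap in the logic.
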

This leads to the following

\begin{prob}
\label{prob:addingfirstrow-1stabilityonext}
Does the isomorphism in \eqref{eq:Homstabilizesaddingcong-1asnewfirstrow} hold for $\HH^i$ for any other $i>0$?
\end{prob}

\section{Complexity of symmetric group modules.}
\label{sec: Specht module complexity}

Some computer calculations done by the VIGRE algebra group at the University of Georgia suggest that twisting of partitions may arise in determining the complexity of Specht modules.  A thorough discussion of the complexity of modules can be found in \cite[Ch. 5]{Bensonrepteoryvolume2} Recall that an indecomposable module $M$ has \emph{complexity} the smallest $c=c(M)$ such that the dimensions in a minimal projective resolution are bounded by a polynomial of degree $c-1$. The maximum possible complexity for $M$ is the $p$-rank of the defect group of its block, which for the symmetric group is just the $p$-weight $w$ of the block.

Determining the complexity of various $k\Sigma_d$ modules is an active area of research. The complexity $c(Y^\lambda)$ was determined in \cite[3.3.2]{HNsupportvariety}. It is worth remarking that for $\lambda \vdash d$ it follows immediately from that result that $c(Y^{p\lambda})=d$, the maximum possible.

Very little is known on the complexity of simple modules $D^\lambda$. The paper \cite{HNsupportvariety} gives an answer when $D^\lambda$ is \emph{completely splittable.} The preprint \cite{LimTancomplexityRouquierpreprint} show that simple modules in Rouquier blocks of weight $w<p$ all have complexity $w$.

In contrast to the ``twisted" Young modules $Y^{p\lambda}$ having maximal possible complexity, it seems the situation for Specht modules $S^\lambda$ may be somewhat reversed. Say a partition $\lambda$ is $p \times p$ if both $\lambda$ and $\lambda '$ are of the form $p\tau$. Equivalently if the Young diagram of $\lambda$ is made up of $p \times p$ blocks.

 The UGA VIGRE Algebra Group made the following conjecture:

 \begin{conj}[UGA VIGRE \footnote{This conjecture and some discussion can be found at \href{http://www.math.uga.edu/~nakano/vigre/vigre.html}
{http://www.math.uga.edu/$\sim$ nakano/vigre/vigre.html}}]
\label{conj: Vigre}
Let $S^\lambda$ be in a block $B$ of weight $w$. Then the complexity of $S^\lambda$ is $w$ if and only if $\lambda$ is not $p \times p$.
\end{conj}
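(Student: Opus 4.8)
The plan is to convert the complexity statement into the computation of a rank variety over a maximal elementary abelian $p$-subgroup, and then to read the answer off the branching rule for Specht modules. Let $\lambda\vdash d$, let $B$ be the weight-$w$ block containing $S^\lambda$, and let $D$ be a defect group of $B$, so that $D$ is a Sylow $p$-subgroup of $\Sigma_{pw}\leq\sd$ acting on $pw$ of the $d$ points. Every module in $B$ is relatively $D$-projective, so $c(S^\lambda)=\dim V_D(\mathrm{Res}_D S^\lambda)$, and by Quillen stratification this equals $\max_E\dim V_E^{\#}(\mathrm{Res}_E S^\lambda)$ as $E$ ranges over elementary abelian subgroups of $D$. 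A short point-count shows that for $p\geq 3$ every rank-$w$ elementary abelian subgroup of $D$ is conjugate in $\sd$ to $E_w=\langle\sigma_1,\dots,\sigma_w\rangle$ with $\sigma_1,\dots,\sigma_w$ disjoint $p$-cycles (for $p=2$ one must also track the classes built from regular copies of $(\mathbb Z/2)^2$). Hence the conjecture is equivalent to: $V_{E_w}^{\#}(\mathrm{Res}_{E_w}S^\lambda)=\mathbb A^w$ if and only if $\lambda$ is not $p\times p$. When $w<p$ the defect group $D=E_w$ is already elementary abelian, so this reformulation is immediate in that range, and I would attack $w<p$ first.

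\emph{The implication ``$\lambda$ is $p\times p\Rightarrow c(S^\lambda)<w$''.} Since a proper closed subvariety of $\mathbb A^w$ has dimension at most $w-1$, it suffices to produce a \emph{single} shifted cyclic subgroup $\langle u\rangle\subseteq kE_w$ with $\mathrm{Res}_{\langle u\rangle}S^\lambda$ free. Write $\lambda=p\mu$ and restrict through $\Sigma_p\wr\Sigma_m\leq\Sigma_{pm}$ ($m=|\mu|$): the permutation module $M^{p\mu}$ is controlled by $M^\mu$ for $\Sigma_m$ together with permutation modules for the base group $\Sigma_p^m$, and the Specht filtration of $S^{p\mu}$ refines this picture. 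The $p\times p$ hypothesis — equivalently $\lambda=p\mu$ with $\lambda'=p\mu'$ — forces the trivial and sign $k\Sigma_p$-factors to enter symmetrically, and I expect this to confine $V_{E_w}^{\#}(\mathrm{Res}_{E_w}S^{p\mu})$ to a proper subvariety of $\mathbb A^w$; in favourable cases it should make $\mathrm{Res}_{\Sigma_p\wr\Sigma_w}S^{p\mu}$ relatively projective with respect to a proper elementary abelian subgroup of rank $w-1$. As a consistency check, both the $p\times p$ condition and the complexity are invariant under $\lambda\mapsto\lambda'$: the condition visibly, and the complexity because $S^\lambda\otimes\sgn\cong(S^{\lambda'})^{*}$ and neither dualizing nor twisting by a one-dimensional module alters complexity.

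\emph{The implication ``$\lambda$ is not $p\times p\Rightarrow c(S^\lambda)=w$''.} This is the essential content, and it demands the \emph{uniform} lower bound $V_{E_w}^{\#}(\mathrm{Res}_{E_w}S^\lambda)=\mathbb A^w$ for every non-$p\times p$ partition. The group $\Sigma_w$ permuting the cycles $\sigma_i$ normalizes $E_w$ inside $\sd$, so up to this symmetry it is enough to show $\mathrm{Res}_{\langle u\rangle}S^\lambda$ is non-free for the ``diagonal'' shifted subgroups $u=1+\sum_{i\in I}\alpha_i(\sigma_i-1)$; transitivity of restriction rewrites these in terms of restricting $S^\lambda$ first to $\Sigma_{p|I|}\times\Sigma_{d-p|I|}$ and then to a single $\mathbb Z/p$, where the number of non-free Jordan blocks is governed by the combinatorics of removing rim $p$-hooks from $\lambda$; the goal is to prove this count is positive exactly when $\lambda$ is not $p\times p$. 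One then has to upgrade non-freeness along these special lines to non-freeness along every line of $\mathbb A^w$, which I would try via a Dade-lemma or Carlson-module argument, or by exhibiting enough non-nilpotent classes in $\Ext^{\bullet}_{kE_w}(k,\mathrm{Res}_{E_w}S^\lambda)$ to force Krull dimension $w$.

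\emph{The main obstacle.} The crux is the second implication: there is no usable closed formula for $\mathrm{Res}_E S^\lambda$ once $\mathrm{rank}\,E\geq 2$, the iterated branching filtration of $\mathrm{Res}_{\Sigma_p\wr\Sigma_w}S^\lambda$ has many non-split sections, and passing from the rank variety of the associated graded to that of $S^\lambda$ itself, uniformly across all $\lambda$, is precisely where some new positivity or multiplicity input is needed. Even the elementary abelian case $w<p$, where the reduction above is automatic, remains open beyond the special families (hook partitions, partitions lying in Rouquier blocks) for which the complexity of $S^\lambda$ has already been computed, so that case looks like the right testing ground.
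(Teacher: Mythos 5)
This statement is a \emph{conjecture}, not a theorem: the paper does not prove it, and you should not expect to be able to compare your argument against a proof in the source. The author explicitly records the state of the art in the surrounding text: the implication ``$\lambda$ is $p\times p$ $\Rightarrow$ $c(S^\lambda)<w$'' was established in the separate paper \cite{HemmercomplexitySpecht} by finding an equivalent characterization of $p\times p$ partitions on the abacus and then analyzing the branching of $S^\lambda$, while the converse ``$\lambda$ not $p\times p$ $\Rightarrow$ $c(S^\lambda)=w$'' is singled out as Problem~\ref{prob: Vigreconjotehrway} and remains open. So the correct disposition here was to recognize the statement as open rather than to propose a proof.

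As a strategy sketch, your framework is sound but incomplete. The reduction $c(S^\lambda)=\max_E\dim V_E(\mathrm{Res}_E S^\lambda)$ via Alperin--Evens/Quillen and relative $D$-projectivity is standard, and your point-count showing that for $p\geq 3$ all rank-$w$ elementary abelian subgroups of $\Sigma_{pw}$ are $\Sigma_d$-conjugate to $w$ disjoint $p$-cycles is correct (a size-$p^j$ orbit has stabilizer of corank $j$ but costs $p^j$ points, and $j<p^{j-1}$ for $p\geq 3$, $j\geq 2$, so one cannot reach rank $w$ without using only size-$p$ orbits). But the two substantive steps you identify are exactly the ones you leave to ``I expect'' and ``the goal is to prove'': confining the rank variety to a proper subvariety in the $p\times p$ case, and upgrading non-freeness along a handful of special shifted lines to all of $\mathbb{A}^w$ in the non-$p\times p$ case. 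You are candid that these are gaps, which is good; but the proposal as written proves neither implication. For the implication that is actually known, your route (wreath-product restriction through $\Sigma_p\wr\Sigma_m$ and rank varieties) differs from the published one (abacus reformulation of $p\times p$ plus branching); it would be worth checking whether your wreath-product picture recovers Hemmer's result before investing in the much harder converse, where, as you note, even the elementary abelian-defect case $w<p$ is open outside special families.
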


In \cite{HemmercomplexitySpecht} we proved that when $\lambda$ is $p \times p$ then its complexity is not maximal, by finding a natural equivalent condition for $p \times p$ in terms of the abacus display for $\lambda$, and then looking at the branching behavior of $S^\lambda$. The other (surely more difficult!) direction of the conjecture remains open:

\begin{prob}
\label{prob: Vigreconjotehrway}
Resolve the other direction of Conjecture \ref{conj: Vigre}.
\end{prob}

\begin{prob}
\label{prob: doescompmlexitydropmore}
Suppose $\lambda$ is \pbp of weight $w$. Is the complexity of $S^\lambda$ equal to $w-1$, or can it be less than $w-1$?
\end{prob}

Problem \ref{prob: doescompmlexitydropmore} has been resolved only in the case $\lambda=(p, p, \ldots, p) \vdash p^2$. In this case the support variety was computed explicitly by Lim \cite{LimvarietyforSomeSpechtmodules}, and its dimension (which equals the complexity) is indeed $p-1$. The support variety for the Specht module $S^{(3,3,3)}$ in characteristic three provides a motivating example in Chapter 7 of the book \cite{BensonElementaryAbelianbookpreprint} as a small-dimensional module with a very interesting support variety.
Perhaps further twisting might lower the complexity even more:

\begin{prob}One can generalize the definition of \pbp in several ways. For example one obvious generalization would be to require $\lambda$ be $p^2 \times p^2$ . Can one say anything interesting about these situations? Perhaps the complexity drops by even more in this case? For example can one determine the complexity of $S^{(9^9)}$ in characteristic three?
\end{prob}

\section{Generic cohomology for simple modules and twists of the Mullineux map}
Recall that the irreducible modules for $k\Sigma_d$ are labeled by $p$-regular partitions and are denoted $D^\lambda$. However there is another indexing, by $p$-restricted partitions and denoted $D_\mu$. The latter is more natural in some ways, as $D_\mu$ is the image under the Schur functor of the irreducible $G$ module $L(\mu)$. The labellings  are related by:
\begin{equation}
\label{eq: simplemodulelabelsrelations}
D^\lambda \otimes \sgn \cong D_{\lambda '}.
\end{equation}

It was a longstanding open problem to determine the partition $m(\lambda)$ so that $D^\lambda \otimes \sgn \cong D^{m(\lambda)}$. This problem was finally solved by Kleshchev in \cite{KleshchevBranchingIII}. A short time later Ford and Kleshchev \cite{FordKleshMullineux} confirmed that Kleshchev's answer agreed with the original conjecture made by Mullineux in \cite{MullineuxBijectionsofpregular}. We will find it useful to use Mullineux's original algorithm and also a different (but of course equivalent) description given later by Xu in \cite{XuMullineuxCIApaper}. Both are nicely described in \cite{BessenrodtOlssonXu}. It follow from \eqref{eq: simplemodulelabelsrelations} that:

\begin{equation}
\label{eq: relaateupdownMull}
D^\lambda \cong D_{m(\lambda)'}
\end{equation}
so one can easily arrive at a version of the Mullineux bijection, except on $p$-restricted partitions; namely $\lambda \rightarrow m(\lambda') '.$

\subsection{Generic cohomology for two-part irreducibles}
Suppose one wanted a generic cohomology theorem for extensions between simple $k\Sigma_d$ modules. The simple module $L(\lambda)$ corresponds to $D_\lambda$, but $p\lambda$ is never $p$-restricted so $D_{p\lambda}$ does not exist. Strangely though something seems to be going on with the ``wrong" upper notation. For example:

\begin{prop}
\label{prop: stability for 2part simples} Assume $p>2$. Let $\lambda=(v,u)$ and $\mu=(s,r)$ be partitions of $d$. Then
$$\Ext^1_{k\Sigma_{pd}}(D^{p\lambda}, D^{p\mu}) \cong \Ext^1_{k\Sigma_{p^2d}}(D^{p^2\lambda}, D^{p^2\mu}).$$
\end{prop}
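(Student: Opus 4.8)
The plan is to transport the statement, via the standard dictionary relating $GL_n(k)$ and $\Sigma_d$, to the (completely understood) computation of $\Ext^1$ between irreducibles for $SL_2(k)$, and from there to an elementary fact about $p$-adic digits. For $p>2$ every two-part partition is $p$-regular, so $D^{p^j\lambda}$ and $D^{p^j\mu}$ are defined for all $j\geq 0$. Since $-\otimes\sgn$ is a self-equivalence of $k\Sigma_{p^jd}$-mod, \eqref{eq: simplemodulelabelsrelations} gives
$$\Ext^1_{k\Sigma_{p^jd}}(D^{p^j\lambda},D^{p^j\mu})\cong\Ext^1_{k\Sigma_{p^jd}}(D_{(p^j\lambda)'},D_{(p^j\mu)'}),$$
and the conjugate partitions $(p^j\lambda)'=(2^{p^ju},1^{p^j(v-u)})$ and $(p^j\mu)'$ have all parts at most $2<p$, hence are $p$-restricted. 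Therefore $D_{(p^j\lambda)'}=f(L((p^j\lambda)'))$, where $f$ denotes the Schur functor from $S(n,p^jd)$-mod with $n\geq p^jd$. A nonsplit self-extension inside $S(n,p^jd)$-mod of these two irreducibles has both composition factors labelled by partitions with parts $\leq 2$, so the $\Ext^1$ in question is computed inside the ``two-column'' truncation of the polynomial representation theory of $GL_n$; by the conjugation symmetry $\nu\leftrightarrow\nu'$ this truncation is controlled by the two-row data, i.e.\ by $SL_2(k)$, and under this dictionary the twist $\lambda\to p\lambda$ becomes the Frobenius twist $L(\sigma)\to L(\sigma)^{(1)}\cong L(p\sigma)$ of \eqref{eq: twistofsimplemodule} on the relevant $SL_2$-weights. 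The one genuinely technical input is that the Schur functor induces an \emph{isomorphism} on these particular $\Ext^1$ spaces, not merely the injection analogous to \eqref{eq: injectiononextfrobtwist} that always exists; this is where $p>2$ and $p^jv\geq\tfrac12 p^jd$ are used.

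Granting the reduction, we must compare $\Ext^1_{SL_2(k)}(L(pa),L(pb))$ with $\Ext^1_{SL_2(k)}(L(p^2a),L(p^2b))$, for fixed nonnegative integers $a,b$ attached to $\lambda,\mu$ (concretely $a=v-u$ and $b=s-r$). For $SL_2(k)$ the group $\Ext^1(L(m),L(n))$ is at most one-dimensional, and its (non)vanishing is governed by an explicit local condition on the base-$p$ digit strings of $m$ and $n$. Multiplication by $p$ shifts every digit up one place and puts a $0$ in the units position, so $(p^2a,p^2b)$ is obtained from $(pa,pb)$ by prepending a matching pair of zero digits. Such a prepended $(0,0)$ can affect the condition only through its interaction with the adjacent digit-pair, and once we have twisted at least once that pair is itself $(0,0)$, which is inert --- this being precisely the $SL_2$-side shadow of $L(p\sigma)\cong L(\sigma)^{(1)}$. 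Hence the two $\Ext^1$ spaces have equal dimension, which is the assertion. (The same mechanism explains why \emph{one} genuine twist is needed first: the units digits of $a$ and $b$ may themselves be nonzero, so $\Ext^1_{k\Sigma_d}(D^\lambda,D^\mu)$ need not agree with $\Ext^1_{k\Sigma_{pd}}(D^{p\lambda},D^{p\mu})$, in parallel with \eqref{eq: injectiononextfrobtwist} being injective but not onto.)

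The step I expect to be the main obstacle is the bookkeeping of the first paragraph: identifying the precise $SL_2(k)$-irreducible attached to a two-part partition, confirming that $\lambda\to p\lambda$ becomes a Frobenius twist under that identification, and above all verifying that the Schur functor does not lose the relevant $\Ext^1$. It is exactly the restriction to \emph{two} parts that makes this tractable --- the ``two-column'' simples are automatically $p$-restricted once $p>2$, and the ambient category collapses to $SL_2$-theory. A more self-contained route would bypass $SL_2(k)$ and instead read $\Ext^1_{k\Sigma_d}(D^{(v,u)},D^{(s,r)})$ directly off the (multiplicity-free) submodule structure of two-part Specht modules together with the two-part decomposition numbers of James \cite[24.15]{Jamesbook}, at the cost of still having to carry out the same $p$-adic digit-invariance check.
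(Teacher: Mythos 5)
Your proposal takes a genuinely different route from the paper, but it has a real gap that you yourself flag and do not close. The paper's proof is short and direct: it quotes Kleshchev--Sheth's explicit computation of $\Ext^1$ between two-part irreducible $k\Sigma_d$-modules (in terms of the $p$-adic digits $a_i$ of $v-u+1$ and a divisibility condition on $u-r$), and observes that the stated criterion is manifestly invariant under replacing $(pv,pu,ps,pr)$ by $(p^2v,p^2u,p^2s,p^2r)$, since this just shifts the digit string. Your final ``digit-shift is inert'' observation is exactly this same arithmetic fact, so the heart of the argument agrees; the difference is the machinery you wrap around it.

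The gap is in the reduction to $SL_2(k)$. You tensor with $\sgn$ to move to the $p$-restricted labels $(p^j\lambda)'=(2^{p^ju},1^{p^j(v-u)})$ and then assert that the Schur functor gives an \emph{isomorphism} $\Ext^1_{S(n,p^jd)}(L((p^j\lambda)'),L((p^j\mu)'))\cong\Ext^1_{k\Sigma_{p^jd}}(D_{(p^j\lambda)'},D_{(p^j\mu)'})$. This is precisely the technical input you admit you have not verified, and it is not a formality: the Schur functor preserves simples on $p$-restricted weights but does not in general induce isomorphisms on $\Ext^1$, and the comparison theorems in the literature (e.g.\ the Kleshchev--Nakano result \eqref{eq: Spechtcoho} quoted for Specht modules) come with hypotheses and bounded cohomological degree. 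You also appeal to a ``conjugation symmetry $\nu\leftrightarrow\nu'$'' to convert the two-column truncation into $SL_2$-theory; this transpose duality on Schur algebras is another nontrivial functorial input whose compatibility with $\Ext^1$ between simples would need to be established, and it is not simply the Frobenius twist \eqref{eq: twistofsimplemodule}. By contrast, the paper never leaves the symmetric group: Kleshchev--Sheth already packaged the answer in a form where the twist-invariance is visible by inspection, which is both why the paper can make the claim ``this condition is clearly equivalent'' and why no Schur functor comparison is needed. Your closing remark --- that one could instead work from James's two-part decomposition numbers and the submodule structure of two-part Specht modules --- is essentially the route the paper takes, via Kleshchev--Sheth, and would be the way to make your argument rigorous.
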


\begin{proof}Assume $u \geq r$ without loss of generality. All the extensions between two-part simple modules were worked out by Kleshchev and Sheth in \cite{KlesSheth}  (but see the Corrigendum \cite{KleshchevShethCorrgiendum}). In their notation we have $pv-pu+1 = 1 + \sum_{i \geq 1}a_ip^i$ and the condition for the $\Ext$ group to be nonzero is that $pu-pr=(p-a_i)p^i$ for some $i$ such that $a_i >0$ and either $a_{i+1} < p-1$ or $u < p^{i+1}$. This condition is clearly equivalent to the corresponding one for $p^2v-p^2u+1$ and $p^2u-p^2r.$
\end{proof}

 We remark that Proposition \ref{prop: stability for 2part simples} requires the additional twist before the cohomology stabilizes. For example when $p=3$ one can use Kleshchev-Sheth's result to compute:

\begin{eqnarray*}
% \nonumber to remove numbering (before each equation)
  \Ext^1_{\Sigma_{29}}(D^{(20,9)}, D^{(26,3)}) &\cong & k\\
  \Ext^1_{\Sigma_{87}}(D^{(60,27)}, D^{(78,9)}) &\cong& 0.
\end{eqnarray*}

Of course this suggests the following:

\begin{conj}
\label{conj: gencohosimples}Let $\lambda, \mu \vdash d$. Then:
$$\Ext^1_{k\Sigma_{pd}}(D^{p\lambda}, D^{p\mu}) \cong \Ext^1_{k\Sigma_{p^2d}}(D^{p^2\lambda}, D^{p^2\mu}).$$
\end{conj}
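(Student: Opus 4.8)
\section*{A proof proposal for Conjecture~\ref{conj: gencohosimples}}

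The natural plan is to mirror the proofs of Theorems~\ref{thm: gencohoSpecht} and~\ref{thm: Youngmodulegenericcohomology}, transporting the question into the representation theory of $GL_n(k)$ via a Schur functor and then exploiting the fact that the generic cohomology theorem holds \emph{immediately} in degree one for $GL_n$, by~\eqref{eq: generic1cohostabilizesimmediately}. (Implicitly one assumes $\lambda,\mu$ are $p$-regular, so that $p^a\lambda$ and $p^a\mu$ remain $p$-regular and the modules in question are defined.) Tensoring with $\sgn$ and using $D^\alpha\otimes\sgn\cong D_{\alpha'}$, we get $D^{p\lambda}\otimes\sgn\cong D_{(p\lambda)'}$; since $\lambda$ is $p$-regular its transpose $\gamma:=\lambda'$ is $p$-restricted, and $(p\lambda)'$ is the (still $p$-restricted) partition $\gamma^{[p]}$ obtained by repeating each part of $\gamma$ exactly $p$ times. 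Writing $\delta=\mu'$, we therefore have
$$\Ext^1_{k\Sigma_{pd}}(D^{p\lambda},D^{p\mu})\;\cong\;\Ext^1_{k\Sigma_{pd}}\bigl(D_{\gamma^{[p]}},D_{\delta^{[p]}}\bigr),$$
and likewise with $p^2$ in place of $p$. Now $D_{\gamma^{[p]}}$ is the image under the Schur functor $F_{n,pd}$, $n\ge pd$, of the rational simple $L(\gamma^{[p]})$, and the Schur functor comparison for $\Ext^1$ between simple modules (valid for $n\ge pd$ modulo the usual small-prime caveats; this is the simple-module analogue of~\eqref{eq: Spechtcoho}, cf.\ \cite{KNcomparingcohom}) reduces Conjecture~\ref{conj: gencohosimples} to the purely rational assertion
$$\Ext^1_{GL_n}\bigl(L(\gamma^{[p]}),L(\delta^{[p]})\bigr)\;\cong\;\Ext^1_{GL_m}\bigl(L(\gamma^{[p^2]}),L(\delta^{[p^2]})\bigr)\qquad(n\ge pd,\; m\ge p^2d).$$

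The key structural point --- and the source of the difficulty --- is that, because $\gamma$ is $p$-restricted, \emph{every} consecutive difference of $\gamma^{[p]}$ lies in $\{0,1,\dots,p-1\}$, so $\gamma^{[p]}$ is itself a \emph{restricted} dominant weight for $GL_n$. Thus $L(\gamma^{[p]})$ is not a Frobenius twist of anything, the Steinberg tensor product theorem offers no simplification, and the generic cohomology theorem~\eqref{eq: generic1cohostabilizesimmediately} cannot be quoted off the shelf; this is the concrete shadow of the slogan that there is no Frobenius twist for $k\Sigma_d$-modules. What one actually needs is genuine control of $\Ext^1$ between the ``thick-row'' simples $L(\gamma^{[p^a]})$ as $a$ grows --- perhaps through Donkin-type descriptions of such simples via tilting modules or good filtrations, or through truncation functors relating the generalized Schur algebras $S(n,p^ad)$ in different degrees, in the spirit of the techniques of \cite{HenkeSchursubalgebras,HenkeKoenigRelatingPolynomial} recalled in Section~\ref{sec: Morita}.

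I expect this last step to be the main obstacle, and to require an essentially new input beyond what handles the two-row case. Proposition~\ref{prop: stability for 2part simples} succeeds only because Kleshchev and Sheth produced an explicit combinatorial criterion for the non-vanishing of $\Ext^1$ between two-row simples, a criterion visibly invariant under the dilations $x\mapsto px$ of the relevant $p$-adic data; no such criterion is known for general $\lambda$, and indeed even the degree-zero vanishing problem --- computing $\dim\Hom$ between simple $k\Sigma_d$-modules --- is already as hard as the Schur algebra decomposition matrix (cf.\ the remarks after Theorem~\ref{thm: Gill homYoungmodule}). A reasonable intermediate target would be the weaker ``generic'' statement that $\Ext^1_{k\Sigma_{p^ad}}(D^{p^a\lambda},D^{p^a\mu})$ stabilizes for $a\gg 0$, with the numerics of Proposition~\ref{prop: stability for 2part simples} then suggesting that $a=1$ already suffices --- in the same way that the general stability of Theorem~\ref{thm: Youngmodulegenericcohomology} specializes to~\eqref{eq: youngp=2iso}.
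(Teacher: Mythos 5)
This statement is a \emph{Conjecture} in the paper; no proof is given, and the only evidence offered is the two-row case, Proposition~\ref{prop: stability for 2part simples}, which is verified by inspecting the explicit Kleshchev--Sheth criterion for non-vanishing of $\Ext^1$ between two-row simples. Your ``proposal,'' as you yourself acknowledge, does not prove the statement --- it is an analysis of why the obvious transfer to $GL_n$ fails --- and that analysis is essentially correct and in line with the paper's point of view. In particular, the computation that $D^{p\lambda}\cong D_{(p\lambda)'}$ with $(p\lambda)'$ equal to the $p$-\emph{restricted} weight obtained by repeating each part of $\lambda'$ exactly $p$ times is right, and the resulting observation --- that $L((p\lambda)')$ is not a Frobenius twist, so the Steinberg tensor product theorem and~\eqref{eq: generic1cohostabilizesimmediately} give nothing --- is exactly the structural reason the conjecture is not a formal corollary of $GL_n$ generic cohomology. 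This is a concrete rendering of the paper's refrain that there is no Frobenius twist for $k\Sigma_d$, and it explains why the conjecture is stated rather than proved.

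One caveat worth registering: the ``Schur functor comparison for $\Ext^1$ between simple modules'' you invoke is not a direct analogue of~\eqref{eq: Spechtcoho}, which compares cohomology of $\HH^0(d)$ against $\HH^0(\lambda)$, not $\Ext^1$ between simples; the $L(\gamma)$-versus-$D_\gamma$ comparison in the Kleshchev--Nakano framework carries its own hypotheses and degree/characteristic restrictions, and is in general only an injection rather than an isomorphism. So even the reduction of the conjecture to a rational assertion about $\Ext^1_{GL_n}(L(\gamma^{[p]}),L(\delta^{[p]}))$ would need justification, and one would have to check that whatever hypotheses are required are preserved by $\lambda\mapsto p\lambda$. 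Since you then correctly flag the resulting rational $\Ext^1$ problem as the true obstruction and requiring genuinely new input, your overall assessment --- that the conjecture is open and not accessible by quoting generic cohomology --- is the right one and matches the paper.
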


Once again we have a vector space isomorphism in cohomology (Proposition \ref{prop: stability for 2part simples}) with no module homomorphisms realizing it!

\subsection{Mullineux map and twists}
The labelling of irreducibles $D_\mu$ by $p$-restricted partitions seems more natural when comparing with $GL_n(k)$ (where actual Frobenius twists can occur). But $p\lambda$ is never $p$-restricted, and we observed above a relationship between $D^{p\lambda}$ and $D^{p^2\lambda}$. Using \eqref{eq: relaateupdownMull} suggests some relationship between $m(p\lambda)'$ and $m(p^2\lambda)'$. This led us to a strictly  combinatorial question, namely is there any relation between twisting and the Mullineux map? And then of course given any such relation, is there a representation-theoretic interpretation? We have found a large class of partitions that have interesting behavior here.

For example if $\lambda=(\lambda_1, \lambda_2, \ldots, \lambda_s) \vdash d$ is a partition with distinct parts, define

$$\hat{\lambda}=(\lambda_1^{p-1}, \lambda_2^{p-1}, \ldots, \lambda_s^{p-1}) \vdash (p-1)d.$$

\begin{prop}
\label{prop: mulllambdahat}
Suppose $\lambda=(\lambda_1, \lambda_2, \ldots, \lambda_s) \vdash d$ has distinct parts. Then $m(\hat{\lambda})=(p-1)\lambda$.
\end{prop}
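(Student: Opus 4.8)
The plan is to verify the statement directly from one of the explicit combinatorial descriptions of the Mullineux map, exploiting the fact that $\hat\lambda$ has a very rigid shape: its Young diagram is obtained by stacking $p-1$ copies of each row of $\lambda$, and since $\lambda$ has distinct parts the distinct row lengths of $\hat\lambda$ are exactly $\lambda_1 > \lambda_2 > \cdots > \lambda_s$, each occurring with multiplicity $p-1$. First I would record that $\hat\lambda$ is automatically $p$-regular (no part is repeated $p$ or more times), so $m(\hat\lambda)$ is defined, and that the proposed answer $(p-1)\lambda = ((p-1)\lambda_1, \ldots, (p-1)\lambda_s)$ is also $p$-regular since the parts are still distinct. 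So both sides are legitimate $p$-regular partitions of $(p-1)d$, and it remains to match them.

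Next I would compute $m(\hat\lambda)$ using Mullineux's original rim-peeling algorithm (as described in \cite{BessenrodtOlssonXu}): one removes successive $p$-rims, recording for each the number of boxes $a_i$ and the number of rows met $r_i$, and then reconstructs $m(\hat\lambda)$ from the Mullineux symbol $\binom{a_1\ a_2\ \cdots}{r_1\ r_2\ \cdots}$ via the inverse rule. The key observation is that for a partition of the special ``thickened'' shape $\hat\lambda$, the $p$-rim is easy to track: because each row length is repeated exactly $p-1$ times and the distinct lengths are strictly decreasing, peeling the $p$-rim of $\hat\lambda$ produces a partition of the same type, namely $\widehat{\mu}$ where $\mu$ is obtained from $\lambda$ by a controlled operation on its (distinct) parts. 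I would set up this recursion, show the Mullineux symbol of $\hat\lambda$ is obtained in a predictable way, and then check that applying the reconstruction rule yields precisely $(p-1)\lambda$. Alternatively — and this may be cleaner — I would use Xu's description of the Mullineux map, which is phrased in terms of an operation on the ``$p$-regular'' data of the partition and interacts transparently with the $\lambda \mapsto \hat\lambda$ thickening; Xu's algorithm essentially moves boxes between a fixed set of rows, and on $\hat\lambda$ it should act by the identity on the pattern of multiplicities while scaling the row lengths, giving $(p-1)\lambda$ directly.

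The main obstacle I anticipate is the bookkeeping in the $p$-rim computation: one must check carefully that the $p$-rim of $\hat\lambda$ behaves uniformly — in particular that no ``interaction'' occurs between the blocks of equal rows that would break the clean recursion — and handle the edge cases at the last (shortest) rows, where the rim may wrap in a way that depends on the residues of the $\lambda_i$ mod $p$. A safe way to organize this is induction on $s$ (the number of distinct parts), peeling off the data associated to $\lambda_1$ first and reducing to the partition $(\lambda_2, \ldots, \lambda_s)$; the base case $s=1$, where $\hat\lambda = (\lambda_1^{p-1})$ is a rectangle and $m$ of a rectangle is computed by a short direct argument, should be routine. Granting the uniform behavior of the rim, the reconstruction step matching the symbol back to $(p-1)\lambda$ is then a mechanical verification.
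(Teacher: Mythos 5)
The central claim in your rim-peeling route---that removing the $p$-rim of $\hat\lambda$ yields another partition of the form $\hat\mu$---is false, and this breaks the proposed recursion. Trace a $p$-rim of $\hat\lambda=(\lambda_1^{p-1},\lambda_2^{p-1},\ldots)$ from the top right: you take boxes $(1,\lambda_1),\ldots,(p-1,\lambda_1)$ (that is $p-1$ boxes in the last column), and the $p$-th box is then $(p-1,\lambda_1-1)$; the segment then jumps to row $p$ and eats $(p,\lambda_2),\ldots,(2(p-1),\lambda_2),(2(p-1),\lambda_2-1)$, and so on down the blocks. So after one $p$-rim the last row of each block has lost \emph{two} boxes while the other $p-2$ rows of that block lost one, which destroys the ``each length repeated exactly $p-1$ times'' pattern. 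For a concrete check take $p=3$, $\lambda=(3,1)$, $\hat\lambda=(3,3,1,1)$: the $3$-rim has $5$ boxes meeting $4$ rows, and its removal leaves $(2,1)$, which is not of the form $\hat\mu$. The ``no interaction between the blocks'' hypothesis you flag as the main obstacle is thus genuinely violated, not a bookkeeping nuisance, so the inductive scheme as you've set it up does not close. (The identity $m(\hat\lambda)=(p-1)\lambda$ is still true in this example---one can push the Mullineux-symbol computation through by hand and get $(6,2)$---but not via the $\hat\mu$-preserving recursion.)

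Your alternative suggestion (Xu's algorithm) is in fact the route the paper takes, but the description you give of it---``acts by the identity on the pattern of multiplicities while scaling the row lengths''---is not what happens and would need to be replaced by an actual analysis. The paper's argument runs Xu's algorithm for $p-1$ steps, observes that each step records $j_i=s$ and that together these $p-1$ steps strip off exactly the first column of $\hat\lambda$ (an $s(p-1)\times 1$ strip), leaving $\widehat{\overline\lambda}$ where $\overline\lambda$ is $\lambda$ with its first column deleted; the induction is therefore on $\lambda_1$ (the number of columns), not on $s$ (the number of rows), and the base case is $\lambda=(1)$ rather than a rectangle. Column removal is the move that interacts cleanly with the $\hat{\ }$ construction; row removal, which is what your induction on $s$ would require, does not. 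If you want to salvage a rim-peeling proof you would have to compute the full Mullineux symbol of $\hat\lambda$ directly (segment by segment, $s$ segments of length $p$ or $p-1$ per rim), track how the shape degrades across the $\lambda_1$ peels, and then invert the symbol; that is feasible but substantially more bookkeeping than the column-removal induction via Xu.
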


\begin{proof}
Consider the $p$-regular version of Xu's algorithm from \cite{XuMullineuxCIApaper} (nicely described in \cite{BessenrodtOlssonXu}). If we apply it to calculate ${\hat{\lambda}}^X$ using \cite[Def 3.5]{BessenrodtOlssonXu} we obtain $j_1=j_2= \cdots = j_{p-1}=s$. At this point in the algorithm the first column (consisting of $s(p-1)$ nodes) will have been removed from $\hat{\lambda}$, and what remains is $\widehat{\overline{\lambda}}$ where $\overline{\lambda}$ denotes $\lambda$ with its first column removed. One can now apply induction using \cite[Proposition 3.6($2'$)]{BessenrodtOlssonXu} or just continue with the algorithm to obtain $(p-1)\lambda.$
\end{proof}

\begin{cor}
\label{cor: m(twist)=twistm}
Let $\lambda \vdash d$ have distinct parts. Then
$$m(p\mu)=pm(\mu)$$ for both $\mu=(p-1)\lambda$ and $\mu=\hat{\lambda}.$
\end{cor}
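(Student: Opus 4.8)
The plan is to reduce the corollary to Proposition \ref{prop: mulllambdahat}, applied to two different partitions, together with the elementary fact --- immediate from the defining relation $D^\lambda \otimes \sgn \cong D^{m(\lambda)}$ and $\sgn\otimes\sgn\cong k$ --- that $m$ is an involution on the set of $p$-regular partitions. The one bookkeeping observation needed is that the ``thickening'' operation $\nu \mapsto \hat\nu$ commutes with scaling by $p$: for any partition $\nu=(\nu_1,\ldots,\nu_s)$ with distinct parts,
$$p\hat\nu = \big((p\nu_1)^{p-1},\ldots,(p\nu_s)^{p-1}\big) = \widehat{p\nu},$$
and since $p\nu$ again has distinct parts, $\widehat{p\nu}$ is $p$-regular and Proposition \ref{prop: mulllambdahat} applies to it. I should also note in passing that all the partitions occurring below are $p$-regular: $\hat\lambda$ and $p\hat\lambda$ have each distinct part repeated exactly $p-1<p$ times, while $(p-1)\lambda$ and $p(p-1)\lambda$ have strictly decreasing parts; this is what makes $m$ and the involution identity available on them.

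For the case $\mu = \hat\lambda$ I would then simply compute, using the commutation above, Proposition \ref{prop: mulllambdahat} for the distinct-part partition $p\lambda$, and Proposition \ref{prop: mulllambdahat} for $\lambda$,
$$m(p\hat\lambda) \;=\; m(\widehat{p\lambda}) \;=\; (p-1)(p\lambda) \;=\; p\big((p-1)\lambda\big) \;=\; p\,m(\hat\lambda).$$
For the case $\mu = (p-1)\lambda$ I would apply $m$ to the identity just obtained. Since $m$ is an involution and both $p\hat\lambda$ and $p(p-1)\lambda$ are $p$-regular, $m(p\hat\lambda) = p(p-1)\lambda$ gives $m\big(p(p-1)\lambda\big) = p\hat\lambda$; and by Proposition \ref{prop: mulllambdahat} and the involution property, $m\big((p-1)\lambda\big) = m\big(m(\hat\lambda)\big) = \hat\lambda$, so $p\hat\lambda = p\,m\big((p-1)\lambda\big)$. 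Combining, $m\big(p(p-1)\lambda\big) = p\,m\big((p-1)\lambda\big)$, which is the desired identity for $\mu=(p-1)\lambda$.

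I do not anticipate a real obstacle here: all the genuine combinatorial work sits in Proposition \ref{prop: mulllambdahat}, and the corollary is precisely the remark that invoking that proposition for $\lambda$ and for $p\lambda$ at once, and exploiting that $m$ is an involution, is enough. The only points deserving an explicit line are the verification that the partitions in sight are $p$-regular and the trivial identity $p\hat\nu = \widehat{p\nu}$; once those are in place the two displayed computations finish the proof.
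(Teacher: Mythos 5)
Your proof is correct and takes essentially the same approach as the paper: both deduce the corollary by invoking Proposition \ref{prop: mulllambdahat} for $\lambda$ and for $p\lambda$, using the identity $\widehat{p\lambda}=p\hat\lambda$ and the involutivity of $m$. You are somewhat more explicit than the paper about where the involution property is invoked and about checking $p$-regularity, but this is the same argument.
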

\begin{proof}
By Proposition \ref{prop: mulllambdahat} we have:
\begin{eqnarray*}
% \nonumber to remove numbering (before each equation)
  m((p)(p-1)\lambda) &=& \widehat{p\lambda} \\
   &=& p\hat{\lambda} \\
   &=&pm((p-1)\lambda)
\end{eqnarray*}
and
\begin{eqnarray*}
% \nonumber to remove numbering (before each equation)
  m(p\hat{\lambda}) &=& m(\widehat{p\lambda}) \\
   &=& p(p-1)\lambda\\&=&pm(\hat{\lambda}).
\end{eqnarray*}

\end{proof}
The appearance above of $p(p-1)\lambda$ for $\lambda$ having distinct parts is reminiscent of the twist of the Steinberg weight $p(p-1)\rho$ from the $GL_n(k)$ theory, although we have no representation-theoretic interpretation at this time.

The examples in Corollary \ref{cor: m(twist)=twistm} are not the only ones where $m(p\mu)=pm(\mu)$ although other examples seem to be rare. For example if $p=5$ and $d=20$ then Corollary \ref{cor: m(twist)=twistm} yields all $\mu \vdash 20$ where $m(5\mu)=5m(\mu)$. On the other hand for $p=5$ and $\lambda=(3,3)$ we have $m(15,15)=(10,10,10)=5m(3,3)$, although $\lambda$ does not have distinct parts.. This brings us to:

\begin{prob}
\label{prob: mulloftwists} Classify all $\lambda$ such that $m(p\lambda)=pm(\lambda)$ and give a representation-theoretic interpretation of the answer.
\end{prob}
The subset of such $\lambda$ arising in Corollary \ref{cor: m(twist)=twistm} is certainly closed under twisting, as are all other examples we have computed, so we conjecture:

\begin{conj}
Suppose $m(p\lambda)=pm(\lambda)$. Then $m(p^2\lambda)=pm(p\lambda)$.
\end{conj}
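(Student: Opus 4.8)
The plan is to work entirely at the combinatorial level using Xu's version of the Mullineux algorithm, as described in \cite{BessenrodtOlssonXu}, and to extract a ``commutation'' principle: applying Xu's algorithm to the twisted partition $p\mu$ is, column by column, a $p$-fold dilation of applying it to $\mu$. Concretely, I would first establish the following lemma: if $\mu$ is $p$-regular, then in Xu's algorithm the sequence of column-removal parameters $j_1, j_2, \ldots$ for $p\mu$ is obtained from the sequence $j_1', j_2', \ldots$ for $\mu$ by replacing each $j_i'$ with $p$ consecutive copies (the first column of $p\mu$ has $p$ times as many nodes of each "length" as that of $\mu$, so the greedy choice in \cite[Def.~3.5]{BessenrodtOlssonXu} simply repeats each step $p$ times). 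This is essentially the mechanism already used in the proof of Proposition~\ref{prop: mulllambdahat}, where the parameters came out as $j_1=\cdots=j_{p-1}=s$; here I would isolate that mechanism as a standalone statement about $p\mu$ for \emph{arbitrary} $p$-regular $\mu$, using the inductive step \cite[Proposition 3.6($2'$)]{BessenrodtOlssonXu} to peel off first columns.

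Granting this lemma, one gets a clean functional statement: there is an operation $X$ (Xu's transform) with $X(p\mu) = p\,X(\mu)$ for all $p$-regular $\mu$, and $m$ is built from $X$ together with conjugation-type bookkeeping in a way that also commutes with the $p$-dilation. In particular $m(p\mu) = p\,\bigl(\text{something depending on }\mu\bigr)$ whenever that something is itself a genuine partition — but the subtlety is that $m(p\mu)=pm(\mu)$ is \emph{not} automatic; it is exactly the hypothesis telling us that the "something" equals $m(\mu)$. So the second step is: assume $m(p\lambda)=pm(\lambda)$, set $\mu=p\lambda$ (which is $p$-regular, being a genuine twist), and apply the dilation lemma to $\mu$. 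This gives $m(p\mu)=m(p^2\lambda)$ expressed as $p$ times the same "something" that computes $m(p\lambda)$ from $p\lambda$; but by hypothesis that something is $m(p\lambda)$ itself, so $m(p^2\lambda)=p\,m(p\lambda)$, which is the claim. Symbolically:
\begin{equation*}
m(p^2\lambda)=m(p\cdot p\lambda)=p\cdot X_\ast(p\lambda)=p\cdot m(p\lambda),
\end{equation*}
where $X_\ast$ denotes the full Mullineux computation and the middle equality is the dilation lemma, the last using the hypothesis.

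The main obstacle is making the dilation lemma fully rigorous, i.e.\ proving that \emph{every} step of Xu's algorithm on $p\mu$ — not just the first-column removals but the internal "$a$-sequence / $j$-sequence" construction of \cite[Def.~3.5]{BessenrodtOlssonXu} and the passage between $p$-regular and $p$-restricted conventions — genuinely multiplies lengths and multiplicities by $p$ in lockstep. The edge cases to watch are where the greedy choice in Xu's algorithm is constrained by the $p$-regularity bound (a column cannot be chosen if it would create $p$ equal parts), since after dilation these bounds scale and one must check no new obstruction appears; I expect this to follow because a part-multiplicity that was $\le p-1$ in $\mu$ becomes a multiple of $p$ in $p\mu$ but is still read off in blocks of $p$, so regularity is preserved step-by-step. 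A secondary, more bookkeeping obstacle is that $m$ as used in the conjecture is defined on $p$-regular partitions while Xu's cleanest formulation toggles to the $p$-restricted side; keeping track of the conjugations (and invoking \eqref{eq: relaateupdownMull}) so that the final identity lands back on $p$-regular partitions requires care but no new idea. If the dilation lemma resists a clean proof via Xu's algorithm, a fallback is to run the same argument through Mullineux's original rim-hook algorithm from \cite{MullineuxBijectionsofpregular}, where the $p$-rim of $p\mu$ visibly decomposes into $p$ scaled copies of the $p$-rim of $\mu$; the conjecture would then again reduce to iterating that observation using the hypothesis.
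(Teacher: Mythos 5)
First, a point of orientation: the paper does not prove this statement. It is presented as an open conjecture, motivated by the distinct-parts special cases in Corollary~\ref{cor: m(twist)=twistm} and by computation; there is no paper proof against which to measure yours. Unfortunately, your proposed argument also contains a gap that no amount of bookkeeping will repair.

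The dilation lemma you want --- that Xu's column-removal data for $p\mu$ is obtained from that of $\mu$ by repeating each step $p$ times, giving a transform $X$ with $X(p\mu)=pX(\mu)$ for \emph{every} $p$-regular $\mu$, with $m$ inheriting this commutation --- would, if true, yield $m(p\mu)=pm(\mu)$ unconditionally, making the hypothesis superfluous. But the paper is explicit that this unconditional identity fails: Problem~\ref{prob: mulloftwists} asks to classify the rare $\lambda$ for which $m(p\lambda)=pm(\lambda)$, and the surrounding text observes that for $p=5$, $d=20$ it holds \emph{only} for partitions produced by Corollary~\ref{cor: m(twist)=twistm}. Your hedge ``$m(p\mu)=p(\text{something})$ whenever that something is itself a genuine partition'' does not rescue this, since if $X$ were a well-defined map with $X(p\mu)=pX(\mu)$ then the ``something'' is always $X(\mu)$, which is always a partition, and the unconditional statement returns. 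The concrete obstruction is that $\lambda\mapsto p\lambda$ dilates row lengths but not column heights: $(p\mu)'$ is $\mu'$ with each part repeated $p$ times, \emph{not} $p\mu'$, so for instance the first column of $p\mu$ has the same height as that of $\mu$. Hence neither Xu's column-removal steps nor Mullineux's $p$-rim removals (your fallback) scale by $p$ in lockstep for general $\mu$; they do so only when every removal consists of horizontal runs of length a multiple of $p$, which is precisely the distinct-parts situation exploited in Proposition~\ref{prop: mulllambdahat} and Lemma~\ref{lem: distinct parts}. That hypothesis cannot be dropped, and a correct proof of the conjecture must use $m(p\lambda)=pm(\lambda)$ in an essential way rather than as a consistency check on a lemma that, as stated, would be unconditional and is in fact false.
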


More generally we can ask:

\begin{prob}
Classify all $\lambda$ such that $m(p\lambda)=p\tau$ for some $\tau.$
\end{prob}
For example when $p=5$ and $\lambda=(6,6,4)$ we have:
$$m(30,30,20)=(20^3,5^4)=5m(10,3,3).$$

Even for partitions without such a nice relationship, there still seems to be some intriguing behavior among the $m(p^a\lambda)$ for various $a$. For example we will now derive a sort of Steinberg tensor product theorem for Mullineux maps.

First define $\tau_n=m(1^n)$, so the trivial $k\Sigma_n$ module is $D_{\tau_n}$. It is well known that $\tau_n=(p-1,p-1, p-1, \cdots, p-1, a)$.

\begin{lem}
\label{lem: distinct parts}
Suppose $\lambda=(\lambda_1, \lambda_2, \ldots, \lambda_s)$ is a partition with distinct parts. Then:
$$m(p\lambda)'= \tau_{p\lambda_1} + \tau_{p\lambda_2} + \cdots + \tau_{p\lambda_s}.$$
\end{lem}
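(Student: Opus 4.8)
The plan is to argue by induction on the number $s$ of (distinct) parts of $\lambda$, running Mullineux's original algorithm of repeated $p$-rim removal and bookkeeping with Mullineux symbols $\binom{a_1,\dots,a_r}{s_1,\dots,s_r}$; recall that $m(\nu)$ has the symbol obtained from that of $\nu$ by replacing each $s_i$ by $a_i-s_i$ when $p\mid a_i$ and by $a_i-s_i+1$ otherwise. First I would dispose of the base case $s=1$: since $k\cong D^{(n)}\cong D_{m((n))'}$ by \eqref{eq: relaateupdownMull} while also $k\cong D_{\tau_n}$, one has $\tau_n=m((n))'$ (explicitly $\tau_n=((p-1)^q,a)$ when $n=q(p-1)+a$, $0\le a\le p-2$), and this is exactly the statement $m((p\lambda_1))'=\tau_{p\lambda_1}$.

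The heart of the inductive step is to run the $p$-rim algorithm on $p\lambda$. Since the $\lambda_i$ are distinct, $p(\lambda_i-\lambda_{i+1})\ge p$ for $i<s$, and one checks that the first $p$-rim of $p\lambda$ consists precisely of the last $p$ cells of each of the $s$ rows — a strip of length $ps$ — whose removal leaves $p(\lambda_1-1,\dots,\lambda_s-1)$ when $\lambda_s\ge2$ and $p(\lambda_1-1,\dots,\lambda_{s-1}-1)$ when $\lambda_s=1$. Iterating, the first $\lambda_s$ $p$-rims of $p\lambda$ each have length $ps$ with $s$ rows present, and after $\lambda_s$ steps one arrives at $p\mu$, where $\mu=(\lambda_1-\lambda_s,\dots,\lambda_{s-1}-\lambda_s)$ has $s-1$ distinct parts. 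Hence the Mullineux symbol of $p\lambda$ is that of $p\mu$ with $\lambda_s$ columns $\binom{ps}{s}$ prepended, so (as $p\mid ps$) the symbol of $m(p\lambda)$ is that of $m(p\mu)$ with $\lambda_s$ columns $\binom{ps}{s(p-1)}$ prepended. By the inductive hypothesis, $m(p\mu)'=\tau_{p(\lambda_1-\lambda_s)}+\dots+\tau_{p(\lambda_{s-1}-\lambda_s)}$.

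To conclude I must identify the partition with this symbol as $(\tau_{p\lambda_1}+\dots+\tau_{p\lambda_s})'$. Using $(\alpha+\beta)'=\alpha'\cup\beta'$ (multiset union of parts) and $\tau_n'=m((n))$, both $(\tau_{p\lambda_1}+\dots+\tau_{p\lambda_s})'$ and $m(p\mu)$ become ``block unions'' $\sigma:=\bigcup_i m((p\lambda_i))$ and $\bigcup_{i<s}m((p(\lambda_i-\lambda_s)))$; since the values $\lfloor p\lambda_1/(p-1)\rfloor>\cdots>\lfloor p\lambda_s/(p-1)\rfloor$ strictly decrease, the parts of the successive blocks $m((p\lambda_i))$ do not interleave, so $\sigma$ is simply their concatenation. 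I would then run the $p$-rim algorithm on $\sigma$ and verify that, for $0\le t<\lambda_s$, the $(t{+}1)$st $p$-rim again has length $ps$ with $s(p-1)$ rows present and that after $t$ steps the partition is $\bigcup_i m((p(\lambda_i-t)))$; after $\lambda_s$ steps the last block vanishes, leaving $\bigcup_{i<s}m((p(\lambda_i-\lambda_s)))=m(p\mu)$. Thus $\sigma$ and $m(p\lambda)$ have equal Mullineux symbols and so coincide, which is the assertion.

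The step I expect to be the main obstacle is this last $p$-rim analysis of $\sigma$. Unlike $p\lambda$, the partition $\sigma$ has rows taking two distinct values inside each block $m((p\lambda_i))$, and consecutive blocks need not be well separated — one can have $\lfloor p\lambda_i/(p-1)\rfloor=\lfloor p\lambda_{i+1}/(p-1)\rfloor+1$, or $(p-1)\mid\lambda_i$ — so one must track with some care how the length-$p$ segments cross block boundaries. This is routine but fiddly; alternatively one can avoid it by reconstructing the partition from the right directly from the symbol $\binom{ps}{s(p-1)}^{\lambda_s}$ followed by that of $m(p\mu)$. In either case the whole induction can equally be carried out using Xu's form of the algorithm, exactly as in the proof of Proposition \ref{prop: mulllambdahat}.
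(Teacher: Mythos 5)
Your argument is correct and follows the paper's own proof: both induct on $s$ and run Mullineux's original $p$-rim algorithm, using distinctness of parts to see that each $p$-rim of $p\lambda$ is a horizontal strip of length $ps$ across $s$ rows, so that $G_p(m(p\lambda))$ has columns of the form $\bigl(\begin{smallmatrix} ip \\ i(p-1)\end{smallmatrix}\bigr)$ with the appropriate multiplicities, and then peeling off the first $\lambda_s$ columns to apply the inductive hypothesis to $p\mu$ with $\mu=(\lambda_1-\lambda_s,\dots,\lambda_{s-1}-\lambda_s)$. The only difference is cosmetic: the paper writes the full symbol at once, while you build it recursively, and the final reconstruction of the partition from the symbol, which you rightly flag as the fiddly step, is also left implicit in the paper's ``apply induction and the result follows.''
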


For example if $\lambda=(4,2,1)$ and $p=5$ then $$m(5\lambda)' = (12, 9, 6, 4, 4)=(4^5)+(4,4,2)+(4,1)=\tau_{20} + \tau_{10} + \tau_5.$$

\begin{proof}
The proof is by induction on $s$ where the case $s=1$ is just the definition of $\tau_n$. We will use the original algorithm of Mullineux,  described and proved in \cite[p. 272]{FordKleshMullineux}. Since the parts of $\lambda$ are distinct, then all the rim $p$-hooks removed in determining the Mullineux symbol $G_p(p\lambda)$ are horizontal. Thus we determine the Mullineux symbol $G_p(m(p\lambda))= $

\begin{equation}
\label{eq: Mullsymbol}
\left(
                    \begin{array}{llllllllll}
                      sp  & \cdots & sp & (s-1)p  & \cdots & (s-1)p &  \cdots & p  & \cdots & p \\
                      s(p-1) & \cdots & s(p-1)  & (s-1)(p-1) & \cdots & (s-1)(p-1) & \cdots & p-1 & \cdots & p-1 \\
                    \end{array}
                  \right)
\end{equation}

\noindent where the first column in \eqref{eq: Mullsymbol} occurs $\lambda_s$ times, the second occurs $\lambda_{s-1}-\lambda_s$ times, the third $\lambda_{s-2}-\lambda_{s-1}$, etc. So the last column
 occurs $\lambda_1-\lambda_2$ times. Notice then that removing the columns $\begin{array}{l}
sp \\
 s(p-1)
 \end{array}$
 we obtain the Mullineux symbol for $p(\lambda_1-\lambda_s, \lambda_2-\lambda_s, \ldots, \lambda_{s-1}-\lambda_s)$. So we apply induction and the result follows.

\end{proof}

Using the result above and drawing a diagram with the appropriate $\tau_{p\lambda_i}$ and $\tau_{p^2\lambda_i}$, the following corollary is immediate:
\begin{cor}
\label{cor: steinbergmull}
Suppose $\lambda$ has distinct parts. Then:
$$m(p^2\lambda)-m(p\lambda)=p\hat{\lambda}.$$
\end{cor}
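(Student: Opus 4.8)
\emph{The plan} is to reduce the statement to the explicit shape of the partitions $\tau_n$, using Lemma~\ref{lem: distinct parts}. Since $\lambda$ has distinct parts so does $p\lambda$, so applying that lemma to $\lambda$ and to $p\lambda$ gives
$$m(p\lambda)' = \sum_{i=1}^{s}\tau_{p\lambda_i}, \qquad m(p^2\lambda)' = m\bigl(p(p\lambda)\bigr)' = \sum_{i=1}^{s}\tau_{p^2\lambda_i},$$
where $+$ denotes addition of parts. I would then conjugate, using the standard fact that conjugation exchanges addition of parts with (sorted) union of parts, to obtain $m(p\lambda)=\bigsqcup_i \tau_{p\lambda_i}'$ and $m(p^2\lambda)=\bigsqcup_i \tau_{p^2\lambda_i}'$. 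It then suffices to understand each conjugate block and to check that the two unions are assembled in the same, blockwise, order.

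\emph{Next} I would record how $\tau_n$ behaves under $n\mapsto pn$. Writing $n=q(p-1)+a$ with $1\le a\le p-1$, one has $\tau_n=((p-1)^q,a)$; since $pa=a(p-1)+a$ we also get $pn=(pq+a)(p-1)+a$, hence $\tau_{pn}=((p-1)^{pq+a},a)$. Thus $\tau_{pn}$ is $\tau_n$ with exactly $(pq+a)-q=n$ extra rows of length $p-1$ inserted and nothing else changed. Conjugating, $\tau_n'$ has $p-1$ parts, namely $a$ copies of $q+1$ followed by $p-1-a$ copies of $q$, and $\tau_{pn}'$ likewise but with $q$ replaced by $q+n$; therefore
$$\tau_{pn}' = \tau_n' + (n^{p-1}),$$
an identity of partitions, valid whenever $q\ge1$ and so in particular for $n=p\lambda_i$. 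Taking $n=p\lambda_i$ gives $\tau_{p^2\lambda_i}' = \tau_{p\lambda_i}' + ((p\lambda_i)^{p-1})$.

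\emph{The one step needing care} is that the sorted union $\bigsqcup_i \tau_{p\lambda_i}'$ is simply the concatenation of $\tau_{p\lambda_1}',\tau_{p\lambda_2}',\ldots,\tau_{p\lambda_s}'$, with no interleaving between blocks, and likewise for $p^2\lambda$. This is exactly where distinctness of the parts of $\lambda$ enters: writing $p\lambda_i=q_i(p-1)+a_i$ with $1\le a_i\le p-1$, one checks $(q_i-q_{i+1})(p-1)=p(\lambda_i-\lambda_{i+1})-(a_i-a_{i+1})\ge p-(p-2)=2$, so $q_i>q_{i+1}$; as the parts of $\tau_{p\lambda_i}'$ are $q_i$ or $q_i+1$, the $i$-th block strictly dominates the $(i+1)$-st, and the same estimate (now also using $p\lambda_i>p\lambda_{i+1}$) handles the blocks $\tau_{p^2\lambda_i}'$, whose parts are $q_i+p\lambda_i$ or $q_i+p\lambda_i+1$. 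Granting this, in the common blockwise order
$$m(p^2\lambda)=\bigsqcup_i\bigl(\tau_{p\lambda_i}'+((p\lambda_i)^{p-1})\bigr)=m(p\lambda)+\bigl((p\lambda_1)^{p-1},\ldots,(p\lambda_s)^{p-1}\bigr)=m(p\lambda)+p\hat{\lambda},$$
using $p\hat{\lambda}=\widehat{p\lambda}$; subtracting $m(p\lambda)$ yields the corollary. Note that both sides of the last displayed identity have $s(p-1)$ parts, so the difference in the statement is a genuine componentwise difference of partitions.

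\emph{Main obstacle.} The only point that is not routine is the non-interleaving of the conjugate blocks, and it is precisely the hypothesis that $\lambda$ has distinct parts that makes it work — without it the two sides need not even have the same number of parts. An equivalent but more cumbersome alternative is to argue with the rows of $\sum_i\tau_{p\lambda_i}$ directly; passing to conjugates is cleaner because each $\tau$ then contributes a single contiguous block of parts to $m(p^a\lambda)$.
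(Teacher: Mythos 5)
Your proof is correct and takes essentially the same route as the paper, which merely declares the corollary ``immediate'' from Lemma~\ref{lem: distinct parts} ``by drawing a diagram''; your argument supplies the details of that diagrammatic observation by conjugating so that each $\tau_{p\lambda_i}$ contributes a contiguous block of parts. The careful verification that the blocks do not interleave (and that this is exactly where distinctness of parts is used) is a genuine improvement in rigor over the paper's one-line justification.
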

Using Proposition \ref{prop: mulllambdahat} one can rewrite this as:
$$m(p^2\lambda)-m(p\lambda)=pm((p-1)\lambda).$$

For arbitrary $\lambda$ with repeated parts we conjecture a weaker form of stability after multiple ``twists:"
\begin{conj}
\label{conj: Mullmultipletwists}
Let $\lambda \vdash d$. Then there exist $1 \leq a<b$ such that:
$$m(p^b\lambda)=m(p^a\lambda) + p^a\tau.$$
\end{conj}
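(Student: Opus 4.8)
The plan is to work entirely inside Mullineux's combinatorial model and reduce the conjecture to an \emph{eventual-stabilization} statement for the sequence $\mu^{(a)}:=m(p^{a}\lambda)$. (Note $\lambda$ must be $p$-regular for the $m(p^{a}\lambda)$ to be defined, and $p^{a}\lambda$ is $p$-regular precisely when $\lambda$ is.) The identity $\mu^{(b)}=\mu^{(a)}+p^{a}\tau$ with $\tau$ a partition is equivalent to: $p^{a}\mid(\mu^{(b)}_{i}-\mu^{(a)}_{i})\geq 0$ for all $i$, and the quotients $(\mu^{(b)}_{i}-\mu^{(a)}_{i})/p^{a}$ are weakly decreasing in $i$. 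I would therefore aim to prove that for all large $a$ one has $\mu^{(a+1)}=\mu^{(a)}+p^{a}\sigma_{a}$ with $\sigma_{a}$ a partition and with the sequence $(\sigma_{a})$ eventually periodic: given that, one simply telescopes, since $\tau=\sum_{i=a}^{b-1}p^{\,i-a}\sigma_{i}$ is automatically weakly decreasing when each $\sigma_{i}$ is. The distinct-parts case is already Corollary~\ref{cor: steinbergmull}, where in fact $\sigma_{a}\equiv\hat\lambda$ for every $a$, so all the new content concerns partitions with repeated parts.

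For the stabilization I would track the Mullineux symbol $G_{p}(p^{a}\lambda)=\left(\begin{smallmatrix}a_{1}&\cdots&a_{r}\\ s_{1}&\cdots&s_{r}\end{smallmatrix}\right)$ as $a$ grows, using that $m$ acts on symbols by the explicit local rule $s_{i}\mapsto a_{i}-s_{i}+\varepsilon_{i}$ ($\varepsilon_{i}=0$ or $1$ according as $p\mid a_{i}$ or not) and that the $p$-regular partition is recovered from its symbol by an explicit telescoping procedure \cite{FordKleshMullineux,BessenrodtOlssonXu}. Thus it suffices to show that (a) the number of columns $r$ of $G_{p}(p^{a}\lambda)$ is eventually independent of $a$, and (b) $G_{p}(p^{a+1}\lambda)$ is obtained from $G_{p}(p^{a}\lambda)$, for $a$ large, by a \emph{fixed} transformation of the entries. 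The expected mechanism: the diagram of $p^{a}\lambda$ is that of $\lambda$ stretched horizontally by $p^{a}$; grouping $\lambda$ into maximal blocks of equal rows $\lambda=(\nu_{1}^{e_{1}},\ldots,\nu_{t}^{e_{t}})$ (each $e_{j}\leq p-1$ by $p$-regularity), each pass of the $p$-rim through $p^{a}\lambda$ splits into a ``horizontal'' part sweeping across the tops of the blocks --- whose cell counts are exactly $p^{a}$ times those produced by Lemma~\ref{lem: distinct parts} applied to $(\nu_{1},\ldots,\nu_{t})$ --- plus a bounded ``vertical correction'' at the right edge of each block whose shape depends only on $e_{j}$, $p$, and a bounded amount of residue data, and not on $a$ once $a$ is large. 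Iterating the peeling then gives both (a) and (b).

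The main obstacle is step (b): controlling how the $p$-rim peels through a block of $e\geq 2$ equal rows of length $p^{a}\nu$, where the removed rim $p$-hooks are \emph{not} horizontal and so the clean argument of Lemma~\ref{lem: distinct parts} does not apply verbatim; one must verify that the ``turning'' of the $p$-rim at the bottom-right of each block, and the carries it induces in the entries of $G_{p}$, are governed only by bounded $p$-adic data. I would first settle the rectangle $\lambda=(\nu^{e})$, where $p^{a}\lambda$ is a $p^{a}\nu\times e$ rectangle and the peeling is completely explicit, extract from it a local ``transfer rule'' describing how a block of $e$ equal rows turns one $p$-rim pass into the next, and then glue the blocks together using the tail-independence argument already present in the inductive step of Lemma~\ref{lem: distinct parts}. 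Once (a) and (b) hold, pushing the stabilized symbols through the recovery procedure gives stability of $\sigma_{a}$, and the weak-decrease condition follows because each block's contribution to $\mu^{(a+1)}-\mu^{(a)}$ is weakly decreasing down its rows. It is conceivable that a genuine multi-step jump ($b>a+1$) is needed to make the monotonicity come out cleanly --- which is presumably why the conjecture is stated with an unspecified $b$ --- but the telescoped $\tau$ is a partition in either case, so this causes no difficulty.
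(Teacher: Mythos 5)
The statement is Conjecture~\ref{conj: Mullmultipletwists}: the paper offers no proof, only Example~\ref{exm: mullmanytwists}, which shows that $b-a\geq 4$ can already be required. Your write-up is a research programme rather than a proof. The reduction in your step~1 to single-step stability $\mu^{(a+1)}=\mu^{(a)}+p^{a}\sigma_{a}$ for all large $a$ is strictly stronger than what the conjecture asserts and is left unverified, and the paper's example shows that the first several consecutive differences $m(p^{a+1}\lambda)-m(p^{a}\lambda)$ need not even be of the form $p\tau$, so any eventual single-step behaviour would have to be deferred well past the first few twists.

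There is also a concretely false step. Your claim~(a), that the number of columns $r$ of the Mullineux symbol $G_{p}(p^{a}\lambda)$ is eventually independent of $a$, is wrong. Each column of $G_{p}(\mu)$ records a $p$-segment removing at most $p\,\ell(\mu)$ cells, where $\ell(\mu)$ is the number of parts; since $p^{a}\lambda$ has the same number of parts as $\lambda$ but $p^{a}d$ cells, $G_{p}(p^{a}\lambda)$ has at least $p^{a-1}d/\ell(\lambda)$ columns, which is unbounded in $a$. For instance $G_{p}\bigl((p^{a})\bigr)$ has $p^{a-1}$ columns, and the explicit symbol in the proof of Lemma~\ref{lem: distinct parts} shows that in the distinct-parts case $G_{p}(p^{a}\lambda)$ has exactly $p^{a-1}\lambda_{1}$ columns --- a factor of $p$ more after each twist. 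So the object that could plausibly stabilize is a self-similar \emph{pattern} of columns whose multiplicities scale by $p$, not the column count itself, and pinning down that self-similarity across blocks of repeated parts, where the $p$-rim turns, is exactly the difficulty you defer to step~(b) without resolving. As it stands, neither the paper nor your sketch establishes the conjecture.
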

As an example of Conjecture \ref{conj: Mullmultipletwists} needing multiple twists consider the following:

\begin{exm}
\label{exm: mullmanytwists}
Let $p=7$ and $\lambda=(29^2, 24, 4^2, 3^3, 2, 1).$ Then:

$$m(7^5\lambda)-m(7\lambda)=7(123840^5, 9600^5, 5400^4, 3840^5, 800^6, 400^6).$$ If $1\leq x<y <5$ then $m(7^y\lambda)-m(7^x\lambda)$ is not of the form $7\tau$, i.e. this stability really requires at least five twists.

\end{exm}

Finally we remark that the results above on the Mullineux map did not depend on $p$ being prime, and any possible representation-theoretic interpretations might hold true for the Hecke algebra of type $A$ at an $e$th root of unity.

It seems fitting to close with a completely general (and quite possibly absurd) question:

\begin{prob}
Can one say anything interesting about how the structure of the principal block $B_0(k\Sigma_{pd})$ is reflected inside $B_0(k\Sigma_{p^2d})$?
\end{prob}
For example Theorem \ref{thm: Gill homYoungmodule} is a statement about Cartan invariants.

\bibliography{references0911}

\bibliographystyle{amsplain}
\end{document}